\newtheorem{thm}{Theorem}
\newtheorem{lemma}[thm]{Lemma}
\newtheorem{cor}[thm]{Corollary}
\newcommand{\reals}{\mathbb{R}}
\newcommand{\naturals}{\mathbb{N}}
\newcommand{\integers}{\mathbb{Z}}
\newcommand{\complex}{\mathbb{C}}
\newcommand{\eps}{\varepsilon}
\newcommand{\supp}{\text{supp}}
\newcommand{\hh}{\mathfrak{H}}
\newcommand{\Ffamily}{\mathcal{F}}
\newcommand{\algebra}{\mathcal{A}}
\newcommand{\BB}{\mathfrak{B}}
\newcommand{\Hilbert}{\mathcal{H}}
\begin{document}
\title{How Arzel\`a and Ascoli would have proved Pego theorem for $L^1(G)$ (if they lived in the $21^{\text{st}}$ century)?}
\author{Mateusz Krukowski}
\affil{Institute of Mathematics, \L\'od\'z University of Technology, \\ W\'ol\-cza\'n\-ska 215, \
90-924 \ \L\'od\'z, \ Poland \\ \vspace{0.3cm} e-mail: mateusz.krukowski@p.lodz.pl}
\maketitle

\begin{abstract}
In the paper we make an effort to answer the question ``What if Arzel\`a and Ascoli lived long enough to see Pego theorem?''. Giulio Ascoli and Cesare Arzel\`a died in 1896 and 1912, respectively, so they could not appreciate the characterization of compact families in $L^2(\reals^N)$ provided by Robert L. Pego in 1985. Unlike the Italian mathematicians, Pego employed various tools from harmonic analysis in his work (for instance the Fourier transform or the Hausdorff-Young inequality). Our article is meant to serve as a bridge between Arzel\`a-Ascoli theorem and Pego theorem (for $L^1(G)$ rather than $L^2(G)$, $G$ being a locally compact abelian group). In a sense, the former is the ``raison d'\^etre'' of the latter, as we shall painstakingly demonstrate.
\end{abstract}

\smallskip
\noindent 
\textbf{Keywords : } Banach algebras, $C^*-$algebra of a locally compact abelian group, Arzel\`a-Ascoli theorem, Pego theorem, equicontinuity and equivanishing, compactness in function spaces\\
\vspace{0.2cm}
\\
\textbf{AMS Mathematics Subject Classification (2010): } 43A15, 43A20, 43A25, 46J10, 46L05

\section{Introduction}

In 1985 Robert L. Pego came up with the following characterization of compact families in $L^2(\reals^N):$

\begin{thm}(classical Pego theorem, comp. \cite{Pego})\\
A bounded family $\Ffamily\subset L^2(\reals^N)$ is relatively compact if and only if 
\begin{itemize}
	\item $\widehat{\Ffamily}$ is equicontinuous, i.e. for every $\eps>0$ there exists an open neighbourhood $U_0\in\reals^N$ of the zero vector such that
	$$\forall_{\substack{y\in U_0\\ f\in\Ffamily}}\ \int_{\reals^N}\ |\widehat{f}(x+y) - \widehat{f}(x)|^2 < \eps,$$
	\item $\widehat{\Ffamily}$ is equivanishing, i.e. for every $\eps>0$ there exists a compact set $K$ (we denote it by $K\Subset \reals^N$) such that 
	$$\forall_{f\in\Ffamily}\ \int_{\reals^N\backslash K}\ |\widehat{f}(x)|^2 < \eps.$$
\end{itemize}
\label{classicalPego}
\end{thm}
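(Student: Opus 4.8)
The plan is to reduce the theorem to a Fr\'echet--Kolmogorov-type compactness criterion by means of Plancherel's theorem, and then to establish that criterion in the spirit of the title, i.e.\ by truncating, mollifying and invoking the Arzel\`a--Ascoli theorem. Since the Fourier transform is an isometric isomorphism of $L^2(\reals^N)$ onto itself, it is in particular a homeomorphism for the norm topology, so $\Ffamily$ is bounded if and only if $\widehat{\Ffamily}$ is, and $\Ffamily$ is relatively compact if and only if $\widehat{\Ffamily}$ is. The whole matter therefore reduces to the following statement applied to $\mathcal G = \widehat{\Ffamily}$: a bounded family $\mathcal G\subset L^2(\reals^N)$ is relatively compact if and only if $(\alpha)$ the translations $y\mapsto\tau_y g$ are continuous at $0$ in $L^2$-norm uniformly over $g\in\mathcal G$, and $(\beta)$ for every $\eps>0$ there is a compact $K$ with $\int_{\reals^N\setminus K}|g|^2<\eps$ for all $g\in\mathcal G$. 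These are precisely the two conditions in the statement, so it remains to prove the two implications of this reformulation.

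For ``relatively compact $\Rightarrow(\alpha)\wedge(\beta)$'' one passes to $\mathcal G$, which is then totally bounded; given $\eps>0$ pick $g_1,\dots,g_n\in\mathcal G$ with $\mathcal G\subset\bigcup_{i=1}^n B_{L^2}(g_i,\eps)$. For a single function $g_i\in L^2(\reals^N)$ one has $\|\tau_y g_i - g_i\|_2\to 0$ as $y\to 0$ (continuity of translation in $L^2$) and $\int_{\reals^N\setminus K}|g_i|^2\to 0$ as $K\uparrow\reals^N$ (dominated convergence), so one can choose a single neighbourhood $U_0$ of $0$ and a single compact $K$ serving all of $g_1,\dots,g_n$ at the level $\eps$. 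A routine triangle-inequality argument then transfers the two estimates from the centres $g_i$ to every member of $\mathcal G$, at the cost of replacing $\eps$ by a fixed multiple of it; after renaming $\eps$ this yields $(\alpha)$ and $(\beta)$.

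The substantive direction is ``$(\alpha)\wedge(\beta)\Rightarrow$ relatively compact'', and this is where Arzel\`a and Ascoli make their entrance. Fix $\eps>0$. Using $(\beta)$, choose $K\Subset\reals^N$ with $\int_{\reals^N\setminus K}|g|^2<\eps^2$ for all $g\in\mathcal G$. Using $(\alpha)$, choose a neighbourhood $U_0$ of $0$ with $\|\tau_y g - g\|_2<\eps$ for all $y\in U_0$ and $g\in\mathcal G$, and fix a nonnegative $\phi\in C_c(\reals^N)$ with $\supp\phi\subset U_0$ and $\int\phi=1$. Minkowski's integral inequality gives $\|g*\phi - g\|_2\le\int\phi(y)\,\|\tau_y g - g\|_2\,dy<\eps$ for every $g\in\mathcal G$. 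Consider now the mollified, truncated family $\{(g*\phi)|_K : g\in\mathcal G\}\subset C(K)$. By the Cauchy--Schwarz inequality $\|g*\phi\|_\infty\le\|g\|_2\,\|\phi\|_2$, which is bounded uniformly in $g$ since $\mathcal G$ is bounded; and, again by Cauchy--Schwarz, $|(g*\phi)(x) - (g*\phi)(x')|\le\|g\|_2\,\|\tau_{x}\phi - \tau_{x'}\phi\|_2 = \|g\|_2\,\|\phi - \tau_{x'-x}\phi\|_2$, which tends to $0$ as $x'-x\to 0$, uniformly in $g\in\mathcal G$ and in $x'$, because $\phi$ has compact support and translation is $L^2$-continuous. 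Thus $\{(g*\phi)|_K\}$ is uniformly bounded and uniformly equicontinuous on the compactum $K$, hence by the Arzel\`a--Ascoli theorem relatively compact, in particular totally bounded, in $C(K)$, and therefore totally bounded in $L^2(K)$ since $K$ has finite measure. Writing $g = \big(g - g\mathbf 1_K\big) + \big(g\mathbf 1_K - (g*\phi)\mathbf 1_K\big) + (g*\phi)\mathbf 1_K$ for $g\in\mathcal G$, the first summand has $L^2$-norm $<\eps$ by $(\beta)$, the second has $L^2$-norm $\le\|g - g*\phi\|_2<\eps$ by the Minkowski estimate, and the third lies in a totally bounded subset of $L^2(\reals^N)$. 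Hence $\mathcal G$ is covered by finitely many balls of radius $3\eps$; as $\eps$ was arbitrary, $\mathcal G$ is totally bounded, and since $L^2(\reals^N)$ is complete, $\mathcal G = \widehat{\Ffamily}$ is relatively compact. By Plancherel, so is $\Ffamily$.

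The main obstacle is concentrated entirely in the last paragraph. One has to make the equicontinuity of the mollified family genuinely uniform in both the increment $x'-x$ and the base point $x'$ (this is exactly where compactness of $\supp\phi$ is indispensable), and one has to be slightly careful that truncation to $K$ and convolution with $\phi$ do not interfere — the standard precaution being to enlarge $K$ a little so that it absorbs the translates of $\supp\phi$ that occur. Everything else is soft: Plancherel transports boundedness and relative compactness between $\Ffamily$ and $\widehat{\Ffamily}$ for free, and the architecture of the argument is the classical ``truncate, mollify, apply Arzel\`a--Ascoli, estimate the two errors'' scheme, with the agreeable feature that the tail error is paid for by equivanishing and the mollification error by equicontinuity, so that each Pego condition finances exactly one approximation step.
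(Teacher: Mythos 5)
Your proof is correct, but it cannot really be compared with ``the paper's own proof'' because there is none: Theorem \ref{classicalPego} is quoted from Pego's 1985 article, and the introduction merely sketches Pego's strategy, namely the equivalence of equicontinuity in one domain with equivanishing in the other (the ``swap'') combined with the Kolmogorov--Tamarkin--Riesz theorem. Your route is genuinely different and, for the statement exactly as printed here, more economical: since both hypotheses are imposed on $\widehat{\Ffamily}$ (rather than one on $\Ffamily$ and one on $\widehat{\Ffamily}$, as in Pego's original formulation), the theorem is literally the Kolmogorov--Riesz compactness criterion applied to the family $\widehat{\Ffamily}$ and transported back by the Plancherel unitary, so no swap between the two domains is needed at all. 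Your self-contained proof of that criterion by the truncate--mollify--Arzel\`a--Ascoli scheme is sound: the necessity direction via an $\eps$-net and the fact that translation is an isometry is standard, and in the sufficiency direction the three estimates $\|g\ast\phi-g\|_2\leqslant\int\phi(y)\,\|\tau_yg-g\|_2\,dy$, $\|g\ast\phi\|_\infty\leqslant\|g\|_2\,\|\phi\|_2$ and $|(g\ast\phi)(x)-(g\ast\phi)(x')|\leqslant\|g\|_2\,\|\phi-\tau_{x'-x}\phi\|_2$ are all valid, and the three-term decomposition closes the argument; the precaution about enlarging $K$ that you raise at the end is in fact unnecessary, because you estimate $\|(g-g\ast\phi)\mathds{1}_K\|_2$ by $\|g-g\ast\phi\|_2$ anyway. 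What your approach buys is a proof in exactly the spirit of the paper's programme (its Theorem \ref{Pegotheorem} for $L^1(G)$ is likewise Arzel\`a--Ascoli transported through the Fourier transform); what it loses is Pego's actual insight, the time--frequency swap, which is what makes the customary one-condition-per-domain formulation nontrivial. Two cosmetic points: the first bullet of the statement bounds the \emph{square} of the $L^2$-norm by $\eps$, so your $\|\tau_yg-g\|_2<\eps$ should read $<\sqrt{\eps}$ (harmless after renaming), and you should note explicitly that extension by zero is an isometric embedding of $L^2(K)$ into $L^2(\reals^N)$ when you transfer total boundedness of $\{(g\ast\phi)\mathds{1}_K\}$ to the ambient space.
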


The heart of Pego's reasoning is that equicontinuity in the ``time domain'' is equivalent to equivanishing in the ``frequency domain'' (i.e. the domain of the Fourier transform). The same is true if we look at equivanishing in the ``time domain'' and equicontinuity in the ``frequency domain'' $-$ they are also equivalent. If we couple these observations with the Kolmogorov-Tamarkin-Riesz theorem (comp. \cite{HancheOlsenHolden}), we obtain an elegant characterization of compact families in $L^2(\reals^N)$ via the Fourier transform. 

Nearly 30 years after Pego's discovery, Przemys\l{}aw G\'orka published a paper (comp. \cite{Gorka}) in which he generalized Theorem \ref{classicalPego} by replacing $\reals^N$ with a locally compact abelian group $G$. Originally, G\'orka's result contained a rather technical restriction, which the author (collaborating with Tomasz Kostrzewa) proved to be obsolete 2 year later (comp. \cite{GorkaKostrzewa}) . G\'orka's approach bears a striking resemblance to Pego's $-$ it employs Plancherel theorem to demonstrate the ``equicontinuity$-$equivanishing swap'' and invokes Weil's theorem (comp. Chapter 12 in \cite{Weil}, p. 52), which generalizes Kolmogorov-Tamarkin-Riesz theorem. 

All three mathematicians (Pego, G\'orka and Kostrzewa) view the Fourier transform as a linear and bounded operator on ``$L^2-$space'' (be it $L^2(\reals^N)$ or $L^2(G)$). However, the Fourier transform is originally defined on ``$L^1-$space'' and only then it is extended to ``$L^2-$space''. This motivates the question: ``\textit{Is there a counterpart of the Pego theorem for $L^1(G)?$}'' Our paper is devoted to answering this question in the affirmative. 

In Chapter \ref{Chapter:Banachalgebras} we briefly recall the notions related to Banach ($*-$)algebras and $C^*-$algebras. The chapter contains a reflextion on the question ``\textit{What if $L^1(G)$ were a $C^*-$algebra?}'' We argue that if it were the case, we could apply the celebrated Gelfand-Naimark theorem directly to $L^1(G),$ which would save us a considerable amount of effort. Unfortunately, $L^1(G)$ is not a $C^*-$algebra and overcoming this hurdle is the primary goal of the consecutive chapter. 

Chapter \ref{Chapter:Cstaralgebra} commences with the introduction of the convolution operator $\Psi.$ Our main source of reference at this point is Deitmar's and Echterhoff's ``Principles of Harmonic analysis'', although we also suggest an alternative approach to the construction of $\Psi.$ The purpose of the convolution operator is to ``copy'' $L^1(G)$ into the $C^*-$algebra $\BB(L^2(G))$ as ``faithfully'' as possible. Theorem \ref{normofconvolutionoperator} and Corollary \ref{Psinotisometry} that it may happen that $\|\Psi(F)\| < \|f\|_1,$ so $C^*(G)$ (the ``copy'' of $L^1(G)$) is not as faithful as we would wish it to be. Nevertheless, $\Psi$ is still an injective algebra $*-$homomorphism $-$ this is not ideal, but definitely very far from tragic. 

Chapter \ref{Chapter:ArzelaAscoliPego} is the final part of our investigations. The beginning of the chapter focuses on Arzel\`a-Ascoli theorem, whose classic version is well-established in the literature. Unfortunately, as far as the author is aware, the same remark does not apply for the generalizations of this result. In the paper, we characterize relatively compact families in $C_0(X)$ (where $X$ is a locally compact group) via the ``collapse-of-the-topologies'' technique. This method can be traced as far back as the classic monographs by Kelley and Munkres (comp. \cite{Kelley} and \cite{Munkres}). Once we have the Arzel\`a-Ascoli theorem at our disposal, we apply this result to characterize relatively compact families in $L^1(G)$ via the Fourier transform. This counterpart of the classic Pego theorem is the climax of the paper.

\section{Beyond Banach algebras}
\label{Chapter:Banachalgebras}

As the ancient proverb goes, ``\textit{A journey of a thousand miles begins with a single step}'' (comp. \cite{LaoTse}, \mbox{Chapter 64}). In accordance with the Chinese wisdom, our journey has to begin somewhere and we feel that the realm of Banach algebras is a good place to start. Following Eberhard Kaniuth's ``A Course in Commutative Banach Algebras'' (comp. \cite{Kaniuth}, p. 1) we say that a normed linear space $(\algebra,\|\cdot\|)$ over the field of complex numbers $\complex$ is a \textit{normed algebra} if it is an algebra and the norm is \textit{submultiplicative}, i.e.
$$\forall_{f,g \in \algebra}\ \|f\cdot g\| \leqslant \|f\|\cdot \|g\|.$$

\noindent
A normed algebra $\algebra$ is said to be a \textit{Banach algebra} if it is a Banach space (a plethora of examples of Banach algebras is provided in \cite{Bobrowski}, Chapter 6). A map $\Psi:\algebra_1\longrightarrow \algebra_2$ between two Banach algebras $\algebra_1$ and $\algebra_2$ is a \textit{Banach algebra homomorphism} if it is continuous, $\complex-$linear and \textit{multiplicative}, i.e.
$$\forall_{f,g\in\algebra_1}\ \Psi(f\cdot g) = \Psi(f)\cdot \Psi(g).$$

With every commutative Banach algebra $\algebra$, one can associate the so-called \textit{structure space} $\Delta_{\algebra}$ (comp. \cite{DeitmarEchterhoff}, p. 43 or \cite{Folland}, p. 5 or \cite{Kaniuth}, p. 46). It is the set of all non-zero Banach algebra homomorphisms $m:\algebra \longrightarrow \complex.$ The elements of $\Delta_{\algebra}$ are typically referred to as the \textit{multiplicative linear functionals}, which is why we use the letter $m$ to denote them.




Suppose that $\algebra$ is a Banach algebra and that $*:\algebra\longrightarrow \algebra$ is a map such that for every $f,g\in\algebra,\ \lambda\in\complex$ we have 
\begin{itemize}
	\item $(f+g)^* = f^* + g^*,$
	\item $(\lambda f)^* = \overline{\lambda}f^*,$
	\item $(f\cdot g)^* = g^*\cdot f^*,$
	\item $(f^*)^* = f,$
	\item $\|f^*\| = \|f\|.$
\end{itemize}

\noindent
The pair $(\algebra,*)$ is called a \textit{Banach $*-$algebra} and the map $*:\algebra\longrightarrow\algebra$ is called an \textit{involution} (comp. \cite{DeitmarEchterhoff}, p. 49). A map $\Psi:\algebra_1\longrightarrow \algebra_2$ between two Banach $*-$algebras $\algebra_1$ and $\algebra_2$ is a \textit{Banach $*-$algebra homomorphism} if, in addition to being a Banach algebra homomorphism, it satisfies
$$\forall_{f\in\algebra_1}\ \Psi\left(f^*\right) = \Psi(f)^*.$$

Arguably the most common example of a Banach $*-$algebra is $\BB(\Hilbert),$ the space of bounded and linear operators on a given Hilbert space $\Hilbert$. The involution $*:\BB(\Hilbert)\longrightarrow\BB(\Hilbert)$ maps every operator $T$ to its \textit{adjoint operator} (comp. Theorem 4.10 in \cite{Rudin} or Proposition 5.4 in \cite{SteinShakarchi}, p. 183), i.e. a unique bounded and linear operator  $T^*$ satisfying 
$$\forall_{u,v\in\Hilbert}\ \langle Tu|v \rangle = \langle u|T^*v \rangle,$$

\noindent
where $\langle\cdot|\cdot\rangle$ denotes the inner product in $\Hilbert.$ The relevance of $\BB(\Hilbert)$ in our discussion will manifest itself in Chapter \ref{Chapter:Cstaralgebra}, where it sets the stage for the $C^*-$algebra of a locally compact abelian group $G$.

The second example of a Banach $*-$algebra, which permeates the mathematical literature (Lemma 2.6.2 in \cite{DeitmarEchterhoff}, p. 50 or \cite{Dixmier}, p. 1 or \cite{Kaniuth}, p. 18), is the space $L^1(G),$ where $G$ is (as always in this paper) a locally compact abelian group. The \textit{convolution} defined by 
$$\forall_{x\in G}\ f\star g(x) := \int_G\ f(x-y)\cdot g(y)\ dy$$

\noindent
is the algebra multiplication in $L^1(G)$ and the involution is given by
$$\forall_{x\in G}\ f^*(x) := \overline{f(-x)}.$$

\noindent
The importance of this example is self-evident: after all, the main goal of the paper is to provide a Pego-like characterization of relatively compact families in $L^1(G).$

Before closing this chapter we introduce one more concept for future progress: a Banach $*-$algebra $\algebra$, which satisfies the \textit{$C^*-$property}: 
\begin{gather}
\forall_{f\in \algebra}\ \|f\cdot f^*\| = \|f\|^2,
\label{Cstarproperty}
\end{gather}

\noindent
is called a $C^*-$algebra (comp. \cite{BelfiDoran}, p. 5 or \cite{Davidson}, p. 1 or \cite{DeitmarEchterhoff}, p. 49 or \cite{Dixmier}, p. 8 or \cite{Kaniuth}, p. 66). Property (\ref{Cstarproperty}) is easily memorized as the functional analytic version of $z\cdot \overline{z} = |z|^2$ for $z\in\complex$ in complex analysis. $\BB(\Hilbert),$ which we mentioned earlier, is an example of a (noncommutative) $C^*-$algebra (comp. \cite{Dixmier}, p. 1 or \cite{DeitmarEchterhoff}, p. 49 or \cite{Kaniuth}, p. 66), but what about $L^1(G)$? If $L^1(G)$ were a (commutative) $C^*-$algebra, then by the celebrated Gelfand-Naimark theorem (comp. Theorem 7.1 in \cite{BelfiDoran}, p. 22 or Theorem I.3.1 in \cite{Davidson}, p. 7 or Theorem 2.6.7 in \cite{DeitmarEchterhoff}, p. 53 or Theorem 2.4.5 in \cite{Kaniuth}, p. 69), it would be isometrically $*-$isomorphic to $C_0(X),$ the space of continuous functions, which vanish at infinity ($X$ would be a certain locally compact space). This, in turn, would greatly simplify our work, as characterizing relatively compact families in $L^1(G)$ would then amount to characterizing relatively compact families in $C_0(X)$. Unfortunately, $L^1(G)$ is \textit{not} a $C^*-$algebra, unless $G$ is trivial (comp. Theorem 2.6.2 in \cite{DeitmarEchterhoff}, p. 50). This entails that if we want to write down Pego theorem for $L^1(G)$ then we need to overcome the obstacle of it not being a $C^*-$algebra. Resolving this issue lies at the heart of our next chapter.

\section{Convolution operator and the \texorpdfstring{$C^*-$}{Lg}algebra of a locally compact abelian group}
\label{Chapter:Cstaralgebra}

As foreshadowed in Chapter \ref{Chapter:Banachalgebras}, the current section will be devoted to the construction of a $C^*-$algebra of a locally compact abelian group $G$. This object will serve as the ``enhanced version'' of $L^1(G),$ which is not a $C^*-$algebra. 

To begin with, for fixed $f\in L^1(G)$ and $\varphi\in L^2(G)$ we consider the map
\begin{gather}
\phi \mapsto \int_G\ f(y)\cdot \langle \phi|T_y\varphi \rangle_2\ dy,
\label{linearfunctional}
\end{gather}

\noindent
where $T_y:L^2(G)\longrightarrow L^2(G)$ is the \textit{translation operator} $T_y\varphi(x) := \varphi(x-y)$ and $\langle\cdot|\cdot\rangle_2$ stands for the inner product in $L^2(G)$. On page 70 in \cite{DeitmarEchterhoff}, Deitmar and Echterhoff prove that the map (\ref{linearfunctional}) is a linear and bound functional on $L^2(G)$. By Riesz representation theorem (Theorem 4.11 in \cite{Brezis}, p. 97 or Theorem 6.52 in \cite{RenardyRogers}, p. 196) there exists a function $\Psi_{f,\varphi}\in L^2(G)$ such that 
\begin{gather}
\forall_{\phi \in L^2(G)}\ \langle \phi|\Psi_{f,\varphi}\rangle_2 = \int_G\ f(y)\cdot \langle \phi|T_y\varphi \rangle_2\ dy.
\label{Rieszcorollary}
\end{gather}

It is easy to observe that for every $\alpha_1,\alpha_2\in\complex$ and $\varphi_1,\varphi_2\in L^2(G)$ we have
$$\Psi_{f,\alpha_1\varphi_1+\alpha_2\varphi_2} = \alpha_1\Psi_{f,\varphi_1} + \alpha_2\Psi_{f,\varphi_2}.$$

\noindent
Furthermore, (\ref{Rieszcorollary}) leads to the following estimate (for details see \cite{DeitmarEchterhoff}, p. 70):
$$\|\Psi_{f,\varphi}\|_2 \leqslant \|f\|_1\cdot \|\varphi\|_2.$$

\noindent
Consequently, we may define a linear and bounded map $\Psi_f: L^2(G)\longrightarrow L^2(G)$ by $\Psi_f(\varphi) := \Psi_{f,\varphi}.$

Deitmar and Echterhoff go on to prove (Lemma 3.3.1 in \cite{DeitmarEchterhoff}, p. 70) that
\begin{gather}
\forall_{\varphi\in L^1(G)\cap L^2(G)}\ \Psi_f(\varphi) = f\star \varphi = \varphi \star f.
\label{Psiandconvolution}
\end{gather}

\noindent
Going off on a tangent for a brief moment, let us remark that we could define $\Psi_f$ in a slightly different manner than Deitmar and Echterhoff do. In this alternative approach (comp. \cite{Kaniuth}, p. 95), we would begin with \textit{defining} $\Psi_f(\varphi) := f\star \varphi$ for every $\varphi\in L^1(G)\cap L^2(G) -$ this definition is obviously inspired by the property (\ref{Psiandconvolution}) in Deitmar-Echterhoff's point of view. Since $L^1(G)\cap L^2(G)$ is dense in $L^2(G),$ we would subsequently extend the operator $\Psi_f:L^1(G)\cap L^2(G)\longrightarrow L^2(G)$ (by Theorem 1.6 in \cite{ReedSimon}, p. 9) to all of $L^2(G).$ This extension is unique, so our operator $\Psi_f$ would have no choice but to coincide with the one defined by Deitmar and Echterhoff. 

Going back to the main line of reasoning, we define the map $\Psi : L^1(G)\longrightarrow \BB(L^2(G))$ by the formula $\Psi(f) := \Psi_f.$ Lemma 3.3.2 in \cite{DeitmarEchterhoff}, p. 70 (or Theorem 2.7.7 in \cite{Kaniuth}, p. 95) proves that $\Psi$ is an injective algebra $*-$homomorphism between Banach $*-$algebras $L^1(G)$ and $\BB(L^2(G)).$ What more can we say about $\Psi$?


\begin{lemma}(known in the mathematical folklore)\\
If $f\in L^1(G)$ and $f\geqslant 0$ then $\|\Psi(f)\| = \|f\|_1.$
\label{normofPsiandfinL1}
\end{lemma}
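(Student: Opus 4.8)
The plan is to keep the already-available contractivity of $\Psi$ on one side of the desired identity and let the hypothesis $f\geqslant 0$ do all the work on the other. One inequality costs nothing: the estimate $\|\Psi_{f,\varphi}\|_2\leqslant\|f\|_1\cdot\|\varphi\|_2$ recorded above yields, for \emph{every} $f\in L^1(G)$,
$$\|\Psi(f)\|\ =\ \sup_{\|\varphi\|_2\leqslant 1}\ \|\Psi_{f,\varphi}\|_2\ \leqslant\ \|f\|_1 ,$$
so the whole substance of the lemma is the reverse inequality $\|\Psi(f)\|\geqslant\|f\|_1$. To obtain it I would produce a net $(\varphi_\alpha)$ of unit vectors in $L^1(G)\cap L^2(G)$ with $\|\Psi(f)\varphi_\alpha\|_2=\|f\star\varphi_\alpha\|_2\longrightarrow\|f\|_1$, the first equality being exactly (\ref{Psiandconvolution}).

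Before constructing the $\varphi_\alpha$ I would reduce to compactly supported $f$. Since $f\geqslant 0$ belongs to $L^1(G)$, the finite measure $f\,d\mu$ (with $\mu$ the Haar measure) is regular, so for any $\eps>0$ there is a compact set $K\Subset G$ with $\|f-f\cdot\mathbf{1}_K\|_1<\eps$. Writing $g:=f\cdot\mathbf{1}_K\geqslant 0$ and using that $\Psi$ is a contraction, $\|\Psi(f)\|\geqslant\|\Psi(g)\|-\|f-g\|_1>\|\Psi(g)\|-\eps$, while $\|g\|_1=\int_K f>\|f\|_1-\eps$. Hence, once the lemma is proved for compactly supported non-negative functions, letting $\eps\to 0$ disposes of the general case. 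From now on I therefore assume $\supp f\subseteq C$ for some fixed $C\Subset G$.

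The unit vectors will come from amenability: every locally compact abelian group admits a F{\o}lner net $(U_\alpha)$, i.e. a net of compact sets of positive finite Haar measure with $|U_\alpha\setminus(x+U_\alpha)|/|U_\alpha|\to 0$ uniformly for $x$ ranging over compact subsets of $G$. Put $\varphi_\alpha:=|U_\alpha|^{-1/2}\,\mathbf{1}_{U_\alpha}$, which lies in $L^1(G)\cap L^2(G)$ and has $\|\varphi_\alpha\|_2=1$; by (\ref{Psiandconvolution}), $\Psi(f)\varphi_\alpha=f\star\varphi_\alpha$. Translation invariance of $\mu$ gives, for every $x\in G$, $f\star\varphi_\alpha(x)=|U_\alpha|^{-1/2}\int_{x-U_\alpha}f(z)\,dz$, and since $\supp f\subseteq C$ and $f\geqslant 0$ this equals $|U_\alpha|^{-1/2}\|f\|_1$ as soon as $C\subseteq x-U_\alpha$, that is, as soon as $x$ lies in $E_\alpha:=\{x\in G:\ x-C\subseteq U_\alpha\}=\bigcap_{c\in C}(U_\alpha+c)$. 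Consequently
$$\|f\star\varphi_\alpha\|_2^{\,2}\ \geqslant\ \int_{E_\alpha}|f\star\varphi_\alpha|^2\ =\ \frac{|E_\alpha|}{|U_\alpha|}\;\|f\|_1^2 ,$$
so that, \emph{provided} $|E_\alpha|/|U_\alpha|\to 1$, we get $\|\Psi(f)\|\geqslant\limsup_\alpha\|\Psi(f)\varphi_\alpha\|_2\geqslant\|f\|_1$; combined with the first paragraph this is the claim.

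The one genuinely non-formal ingredient $-$ everything else being bookkeeping $-$ is the density estimate $|E_\alpha|/|U_\alpha|\to 1$, i.e. that the ``$C$-interior'' of a F{\o}lner set exhausts it. I would prove it by covering the compact set $C$ with finitely many translates $c_1+W,\dots,c_n+W$ of a compact symmetric neighbourhood $W$ of the identity, noting the inclusion $U_\alpha\setminus E_\alpha\subseteq\bigcup_{i=1}^{n}\{x\in U_\alpha:\ x-(c_i+W)\not\subseteq U_\alpha\}$, and applying the F{\o}lner condition to each compact set $c_i+W$ to see that every term on the right has measure $o(|U_\alpha|)$. Finally, it is worth remarking that this hands-on computation is merely the concrete face of a one-line harmonic-analytic fact: under the Plancherel identification $L^2(G)\cong L^2(\widehat G)$ the operator $\Psi(f)$ becomes multiplication by $\widehat f\in C_0(\widehat G)$, whose operator norm is $\sup_{\chi\in\widehat G}|\widehat f(\chi)|$; for $f\geqslant 0$ that supremum is attained at the trivial character $\chi\equiv 1$ and equals $\int_G f=\|f\|_1$, and the vectors $\varphi_\alpha$ constructed above are nothing but approximate eigenfunctions of $\Psi(f)$ attached to that character.
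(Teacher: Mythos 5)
Your overall strategy is sound and genuinely different from the paper's. The paper never leaves the function $f$ itself: it covers $\supp(f)$ by an increasing sequence of symmetric compacta $K_n\ni 0$ (using only $\sigma$-compactness of the support), tests $\Psi_f$ against $\phi_n=\mu(K_n)^{-1/2}\cdot\mathds{1}_{K_n}$, and extracts $(\|f\|_1-\eps)^2$ from the inner integral $\int_{x-K_n}f$. You instead first truncate $f$ to a compact set $C$ by inner regularity (a clean reduction the paper does not make) and then take the testing sets from a F{\o}lner net, independent of $f$; the price is that you must invoke amenability of $G$, whereas the paper's exhaustion is self-contained. Both proofs are at bottom the same computation $-$ a normalized indicator of a large compact set is an approximate eigenvector of $\Psi_f$ at the trivial character, as your closing remark makes explicit $-$ and both hinge on the same quantitative point: the set of those $x$ in the testing set for which $x-(\text{testing set})$ swallows the (truncated) support must have almost full measure in the testing set.

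That point is where your write-up has a gap. The F{\o}lner condition gives $|U_\alpha\setminus(U_\alpha+y)|=o(|U_\alpha|)$ uniformly for $y$ ranging over a compact set, i.e.\ it controls each translate \emph{separately}. But $U_\alpha\setminus E_\alpha=\bigcup_{c\in C}\bigl(U_\alpha\setminus(U_\alpha+c)\bigr)$ is a union over a continuum of such sets, and covering $C$ by finitely many $c_i+W$ does not fix this: each set $\{x\in U_\alpha:\ x-(c_i+W)\not\subseteq U_\alpha\}$ is itself the union $\bigcup_{w\in W}\bigl(U_\alpha\setminus(U_\alpha+c_i+w)\bigr)$ over the uncountable set $W$, and an uncountable union of sets each of measure at most $\eps|U_\alpha|$ can have measure as large as $|U_\alpha|$. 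What you actually need is the \emph{strong} F{\o}lner property $|\bigcap_{c\in C}(U_\alpha+c)|\,/\,|U_\alpha|\to 1$ for every compact $C$. This is true and classical (Emerson--Greenleaf; for locally compact abelian groups one can also obtain it directly from the structure theorem by taking $U_\alpha$ to be boxes in the $\reals^n$-factor times a compact open subgroup, for which the inclusion of ``$C$-interiors'' is checked by hand), but it does not follow from the weak condition by the one-line covering argument you give. Either cite the strong version or construct the $U_\alpha$ explicitly; with that step repaired the proof is complete.
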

\begin{proof}
First, let us remark that it is clear to see that $\Psi$ is a homeomorphism due to the general version of the open mapping theorem (comp. Theorem 2.11 in \cite{Rudin}, p. 48). However, we claim that even more is true, so a more detailed analysis is required. 

We fix $f\in L^1(G)$ and observe that 
$$\forall_{\phi\in L^1(G)\cap L^2(G)}\ \|\Psi_f(\phi)\|_2 \stackrel{(\ref{Psiandconvolution})}{=} \|f\star \phi\|_2 \leqslant \|f\|_1\cdot \|\phi\|_2,$$

\noindent
where the last inequality follows from Young's convolution inequality (comp. Corollary 20.14 in \cite{HewittRoss}, p. 293). We instantly conclude that $\|\Psi_f\|\leqslant \|f\|_1,$ which is the effortless part of the proof. Henceforth, we focus our attention on demonstrating that $\|\Psi_f\|\geqslant \|f\|_1.$

Since $f\in L^1(G)$ then $\supp(f)$ is contained in a $\sigma-$compact set (comp. Corolarry 1.3.5 in \cite{DeitmarEchterhoff}, p. 10). Hence
$$\supp(f)\subset \bigcup_{n=1}^{\infty}\ K_n,$$

\noindent
where $K_n \Subset G$ (meaning $K_n$ is compact) for every $n\in\naturals$. It is easy to see that without loss of generality, we may assume that 
\begin{itemize}
	\item $0\in K_n$ for every $n\in\naturals$ (adjoining one point does not change compactness of $K_n$),
	\item $K_n\subset K_{n+1}$ for every $n\in\naturals$ (this is a standard trick of substituting $D_n:= \bigcup_{m=1}^n\ K_m$ for $K_n$ if necessary),
	\item $K_n$ is symmetric for every $n\in\naturals$ (we may substitute $K_n\cup (-K_n)$ for $K_n$ if necessary).
\end{itemize}

\noindent
Consequently, we have that 
\begin{gather}
\forall_{n\geqslant 2}\ K_n - K_n \supset 0 - K_n = -K_n = K_n \supset K_{n-1}.
\label{inequalitythatweneedattheend}
\end{gather}

Furthermore, let us define a family of functions $\phi_n := \mu(K_n)^{-\frac{1}{2}}\cdot \mathds{1}_{K_n}$ for every $n\in\naturals,$ where $\mathds{1}_{K_n}$ is the characteristic function of $K_n.$ An easy calculation shows that $\|\phi_n\|_2 = 1$ for every $n\in\naturals.$  In order to conclude that $\|\Psi(f)\|\geqslant \|f\|_1,$ it is enough to demonstrate that 
$$\|\Psi_f(\phi_n)\| = \|f\star \phi_n\|_2 \geqslant \|f\|_1 - \eps$$

\noindent 
for a fixed $\eps>0$ and $n$ large enough. To this end we fix $\eps>0$ and choose $N\in\naturals$ such that 
\begin{gather}
\int_{K_N}\ f(x)\ dx \geqslant \|f\|_1 - \eps.
\label{choiceofNforepsilon}
\end{gather}

\noindent
Let us remark that the assumption ``$f\geqslant 0$'' plays a pivotal role in the choice of $N$: if $f$ could have been negative then it would be possible for the integral $\int_{K_N}\ f(x)\ dx$ to be negative and $\|f\|_1 - \eps$ to be positive. This would lead to contradiction. In other words, the choice of $N$ is possible due to the assumption that $f$ is non-negative.

Finally, we have 
\begin{gather*}
\forall_{n> N}\ \|f\star \phi_n\|_2^2 = \int_G\left|\int_G\ f(x-y)\cdot \phi_n(y)\ dy\right|^2\ dx = \frac{1}{\mu(K_n)}\int_G\ \left(\int_{K_n}\ f(x-y)\ dy\right)^2\ dx \\
\stackrel{y\mapsto x-y}{=} \frac{1}{\mu(K_n)}\int_G \left(\int_{x-K_n}\ f(y)\ dy\right)^2\ dx \geqslant \frac{1}{\mu(K_n)}\int_{K_n} \left(\int_{x-K_n}\ f(y)\ dy\right)^2\ dx \\
\stackrel{(\ref{inequalitythatweneedattheend})}{\geqslant} \frac{1}{\mu(K_n)}\int_{K_n} \left(\int_{K_N}\ f(y)\ dy\right)^2\ dx \stackrel{(\ref{choiceofNforepsilon})}{\geqslant} (\|f\|_1 - \eps)^2,
\end{gather*}

\noindent
which concludes the proof.
\end{proof}


	

The above lemma is elegant and charming, but the assumption ``$f\geqslant 0$'' limits its applicability to a certain degree. If the assumption of nonnegativity is removed, then the equality $\|\Psi(f)\| = \|f\|_1$ need not hold. In order to see this, let us recall that for every function $f\in L^1(G)$ we define its Fourier transform (comp. Chapter 1.7 in \cite{DeitmarEchterhoff}, p. 29 or Chapter 4.2 in \cite{Folland}, p. 93 or Chapter 2.8.4 in \cite{HavinNikolski}, p. 260 or Chapter 8 in \cite{HewittRoss2}, p. 209) with the formula
\begin{gather}
\forall_{\chi\in\widehat{G}}\ \widehat{f}(\chi) := \int_G\ f(x)\cdot \overline{\chi(x)}\ dx,
\label{Fouriertransform}
\end{gather}

\noindent
where $\widehat{G}$ is the dual group (comp. Chapter 7 in \cite{Deitmar}, p. 101 or Chapter 3.1 in \cite{DeitmarEchterhoff}, p. 63 or Chapter 4.1 in \cite{Folland}, p. 87 or Chapter 2.8.1 in \cite{HavinNikolski}, p. 255 or Chapter 6.23 in \cite{HewittRoss}, p. 355). We will be particularly interested in the dual group $\widehat{\integers}$, whose elements (characters) are the maps $\chi_{\alpha}:\integers\longrightarrow S^1$ given by
$$\chi_{\alpha}(k) := e^{2\pi i k\alpha},$$ 

\noindent
where $\alpha\in [0,1)$ (comp. Proposition 7.1.1 in \cite{Deitmar}, p. 101). If $[0,1)$ is treated as topological group with addition $\bmod 1$ then it can be shown (comp. Proposition 7.1.6 in \cite{Deitmar}, p. 106) that the map $\hh:\widehat{\integers}\longrightarrow ([0,1),+_{\bmod 1})$ given by
\begin{gather}
\hh(\chi_{\alpha}) := \alpha
\label{hisomorphism}
\end{gather}

\noindent
is a homeomorphic isomorphism. This fact allows for a convenient description of the Haar measure on $\widehat{\integers},$ due to the following result:

\begin{lemma}(known in the mathematical folklore)\\
Let $G_1$ and $G_2$ be locally compact (not necessarily abelian) groups and let $h:G_1\longrightarrow G_2$ be a homeomorphic isomorphism. If $\mu_1,\mu_2$ are (left) Haar measures on $G_1$ and $G_2$ respectively, then $\mu_1 = c\cdot\mu_2\circ h$ for some constant $c>0.$ 
\label{Haarmeasuresandhomeomorphiciso}
\end{lemma}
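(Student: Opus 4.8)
The plan is to use the essential uniqueness of Haar measure on a locally compact group, which is the standard tool here. First I would observe that the composition $\mu_2 \circ h$ is a well-defined Borel measure on $G_1$: since $h$ is a homeomorphism it maps Borel sets of $G_1$ to Borel sets of $G_2$, so for a Borel set $E \subset G_1$ we may set $(\mu_2 \circ h)(E) := \mu_2(h(E))$. I would then check that this measure inherits the defining properties of a left Haar measure on $G_1$, namely: it is a nonzero Radon measure (local finiteness, inner regularity on open sets and outer regularity follow from the corresponding properties of $\mu_2$ together with the fact that $h$ and $h^{-1}$ are continuous and carry compact sets to compact sets and open sets to open sets), and it is left-invariant. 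Left-invariance is the crucial computation: for $g \in G_1$ and a Borel set $E$, because $h$ is a group homomorphism we have $h(gE) = h(g)h(E)$, hence $(\mu_2 \circ h)(gE) = \mu_2(h(g)h(E)) = \mu_2(h(E)) = (\mu_2 \circ h)(E)$ by left-invariance of $\mu_2$.

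Once $\mu_2 \circ h$ is known to be a left Haar measure on $G_1$, the conclusion is immediate from the uniqueness part of the existence-and-uniqueness theorem for Haar measure (any two left Haar measures on a locally compact group differ by a positive multiplicative constant): there exists $c > 0$ with $\mu_1 = c \cdot (\mu_2 \circ h)$, which is exactly the claim. I would cite a standard reference for Haar measure uniqueness at this point (for instance Folland or Deitmar-Echterhoff).

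The main obstacle, such as it is, lies entirely in the verification that $\mu_2 \circ h$ is a genuine Haar measure rather than merely a left-invariant set function — that is, confirming the regularity/Radon conditions transfer correctly under $h$. This is where one must use that $h$ is a \emph{homeomorphic} isomorphism and not just an abstract group isomorphism: the topological hypotheses guarantee that compactness, openness, and Borel measurability are all preserved in both directions, so each regularity condition for $\mu_1$'s candidate reduces to the same condition for $\mu_2$. The left-invariance step, by contrast, is a one-line consequence of $h$ being a homomorphism. Everything else is bookkeeping, so I would keep the write-up short and lean on the uniqueness theorem.
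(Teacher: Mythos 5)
Your proposal is correct and follows essentially the same route as the paper's own proof: verify that $\mu_2\circ h$ is a nontrivial, regular, left-invariant Borel measure on $G_1$ (using that $h$ is simultaneously a homeomorphism and a group isomorphism) and then invoke the uniqueness of Haar measure up to a positive constant. No gaps to report.
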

\begin{proof}
Since a Haar measure is unique up to a positive constant (comp. Theorem 2.20 in \cite{Folland}, p. 39 or Theorem 3.3.2 in \cite{ReiterStegman}, p. 83) then it suffices to check that $\mu_2\circ h$ is a Haar measure on $G_1$. First of all, it is a nontrivial map since $\mu_2\circ h(G_1) = \mu_2(G_2) > 0.$ Obviously, we have $\mu_2\circ h(\emptyset) = \mu_2(\emptyset) = 0.$ Next, if $(A_n)_{n\in\naturals}\subset G_1$ is a sequence of pairwise disjoint Borel-measurable sets then the sequence $(h(A_n))_{n\in\naturals}\subset G_2$ is also pairwise disjoint and we have 
$$\mu_2\circ h\left(\bigcup_{n\in\naturals}\ A_n\right) = \mu_2\left(\bigcup_{n\in\naturals}\ h(A_n)\right) = \sum_{n\in\naturals}\ \mu_2\circ h(A_n).$$

\noindent
This demonstrates that $\mu_2\circ h$ is a nontrivial Borel measure. 

If $K$ is a compact subset of $G_1$ then $h(K)$ is compact in $G_2$ and $\mu_2\circ h(K) < \infty.$ Furthermore, to see that $\mu_2\circ h$ is inner regular we take an open set $U\subset G_1$ (then $h(U)$ is open in $G_2$) and observe that 
\begin{equation*}
\begin{split}
\mu_2\circ h(U) &= \sup \left\{\mu_2(D)\ :\ D\subset h(U), D\Subset G_2 \right\} \\
&= \sup \left\{\mu_2\circ h(h^{-1}(D))\ :\ h\circ h^{-1}(D)\subset h(U), D\Subset G_2\right\}\\
&= \sup \left\{\mu_2\circ h(h^{-1}(D))\ :\ h^{-1}(D)\subset U, D\Subset G_2\right\} \\
&= \sup \left\{\mu_2\circ h(K)\ :\ K\subset U, K\Subset G_1\right\}.
\end{split}
\end{equation*}

\noindent
Analogously, if $A\subset G_1$ is a Borel-measurable set then $h(A)$ is Borel-measurable in $G_2$ and
\begin{equation*}
\begin{split}
\mu_2\circ h(A) &= \inf \left\{\mu_2(U)\ :\ h(A)\subset U, U\ \text{is open in}\ G_2\right\} \\
&= \inf \left\{\mu_2\circ h(h^{-1}(U))\ :\ h(A)\subset h(h^{-1}(U)), U\ \text{is open in}\ G_2\right\}\\
&= \inf \left\{\mu_2\circ h(h^{-1}(U))\ :\ A\subset h^{-1}(U), U\ \text{is open in}\ G_2\right\}\\
&= \inf \left\{\mu_2\circ h(V)\ :\ A\subset V, V\ \text{is open in}\ G_2\right\},
\end{split}
\end{equation*}

\noindent
which establishes that $\mu_2\circ h$ is outer regular. Last but not least, if $x\in G_1$ and $A\subset G_1$ is a Borel-measurable set then
$$\mu_2\circ h(x\cdot A) = \mu_2\left(h(x)\cdot h(A)\right) = \mu_2\circ h(A),$$

\noindent
due to left-invariance of $\mu_2$ (an the fact that $h$ is an isomorphism). This concludes the proof.
\end{proof}

We now apply Lemma \ref{Haarmeasuresandhomeomorphiciso} to describe the Haar measure and Haar integral on $\widehat{\integers}.$ In the corollary below, we use the term ``\textit{normalized} Haar measure'', which means that the measure of the whole (compact) group is equal to $1$.

\begin{cor}(known in the mathematical folklore)\\
The normalized Haar measure $\mu_{\widehat{\integers}}$ on $\widehat{\integers}$ satisfies
\begin{gather}
\mu_{\widehat{\integers}} = \lambda|_{[0,1)}\circ \hh,
\label{Haarmeasureonwidehatintegers}
\end{gather}

\noindent
where $\lambda$ is the classical Lebesgue measure on $\reals$. Consequently, if $F\in L^1(\widehat{\integers})$ then
\begin{gather}
\int_{\widehat{\integers}}\ F(\chi)\ d\chi = \int_0^1\ F\circ\hh^{-1}(\alpha)\ d\alpha.
\label{howtocalculateintegraloverwidehatintegers}
\end{gather}
\end{cor}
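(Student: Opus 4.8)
The plan is to apply Lemma \ref{Haarmeasuresandhomeomorphiciso} with $G_1 = \widehat{\integers}$, $G_2 = ([0,1), +_{\bmod 1})$ and $h = \hh$, the homeomorphic isomorphism from \eqref{hisomorphism}. The lemma tells us that any Haar measure $\mu_{\widehat{\integers}}$ on $\widehat{\integers}$ equals $c\cdot\mu_2\circ\hh$ for some constant $c>0$, where $\mu_2$ is a Haar measure on $[0,1)$ with addition $\bmod 1$. Thus the first step is to identify $\mu_2$: I would observe that the restriction $\lambda|_{[0,1)}$ of Lebesgue measure is translation-invariant on $([0,1),+_{\bmod 1})$, since adding $t \bmod 1$ merely ``wraps around'' a subinterval, and Lebesgue measure is invariant under ordinary translation and under the wrap-around split (a set $A$ is sent to $(A+t)\cap[0,1)$ together with $(A+t-1)\cap[0,1)$, whose Lebesgue measures add up to $\lambda(A)$). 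Hence $\lambda|_{[0,1)}$ is a Haar measure on $G_2$, and it is already normalized since $\lambda([0,1)) = 1$.

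Next I would pin down the constant $c$ using the normalization of $\mu_{\widehat{\integers}}$. Since $\widehat{\integers}$ is compact (being the dual of the discrete group $\integers$) and $\mu_{\widehat{\integers}}$ is the \emph{normalized} Haar measure, $\mu_{\widehat{\integers}}(\widehat{\integers}) = 1$. On the other hand $c\cdot\lambda|_{[0,1)}\circ\hh(\widehat{\integers}) = c\cdot\lambda|_{[0,1)}([0,1)) = c$, because $\hh$ is a bijection onto $[0,1)$. Comparing gives $c = 1$, which establishes \eqref{Haarmeasureonwidehatintegers}.

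For the second claim, the identity \eqref{howtocalculateintegraloverwidehatintegers} is simply the change-of-variables (image-measure / pushforward) formula applied to the measure-preserving bijection $\hh:\widehat{\integers}\longrightarrow[0,1)$. Concretely, for $F\in L^1(\widehat{\integers})$ one has $\int_{\widehat{\integers}} F\,d\mu_{\widehat{\integers}} = \int_{\widehat{\integers}} F\,d(\lambda|_{[0,1)}\circ\hh) = \int_{[0,1)} (F\circ\hh^{-1})\,d\lambda = \int_0^1 F\circ\hh^{-1}(\alpha)\,d\alpha$; the middle equality is the standard fact that integrating against a pulled-back measure $\mu_2\circ h$ equals integrating the pushed-forward integrand $F\circ h^{-1}$ against $\mu_2$. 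I would state this for nonnegative simple functions first and extend by the usual monotone-convergence and linearity argument, though in a paper at this level it is legitimate to just cite it.

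I do not expect any genuine obstacle here; the corollary is essentially a bookkeeping consequence of Lemma \ref{Haarmeasuresandhomeomorphiciso}. The only point requiring a moment's care is verifying that $\lambda|_{[0,1)}$ really is invariant under addition $\bmod 1$ rather than ordinary addition on $\reals$ — i.e. handling the ``carry'' when a translated subinterval crosses the point $1$ — and confirming that the normalization constant works out to exactly $1$. Both are routine.
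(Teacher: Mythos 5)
Your proposal is correct and follows essentially the same route as the paper: identify $\lambda|_{[0,1)}$ as the normalized Haar measure on $([0,1),+_{\bmod 1})$, transport it through $\hh$ via Lemma \ref{Haarmeasuresandhomeomorphiciso}, and then establish the integral formula by the standard characteristic-function/simple-function/monotone-convergence ladder. The only difference is that you spell out two routine points the paper leaves implicit (invariance of $\lambda|_{[0,1)}$ under the wrap-around translation, and the normalization forcing $c=1$), which is a mild improvement rather than a different argument.
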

\begin{proof}
First of all, we remark that $([0,1), +_{\bmod 1})$ is a compact group, since it is homeomorphic to the circle group $S^1$ (or the quotient group $\reals/\integers$ for that matter). Furthermore, $\lambda|_{[0,1)}$ is the normalized Haar measure on $([0,1), +_{\bmod 1}).$ By Lemma \ref{Haarmeasuresandhomeomorphiciso} we have that $\lambda|_{[0,1)}\circ \hh$ is the normalized Haar measure on $\widehat{\integers}.$ This demonstrates the equality (\ref{Haarmeasureonwidehatintegers}).

Regarding the second part of the theorem, we begin by observing that if $F = \mathds{1}_A$ for some measurable $A\subset \widehat{\integers},$ then
\begin{gather*}
\int_{\widehat{\integers}}\ F(\chi)\ d\chi = \int_{\widehat{\integers}}\ \mathds{1}_A(\chi)\ d\chi = \mu_{\widehat{\integers}}(A) \stackrel{(\ref{Haarmeasureonwidehatintegers})}{=} \lambda|_{[0,1)}\circ \hh(A) = \int_0^1\ \mathds{1}_{\hh(A)}(\alpha)\ d\alpha\\
=\int_0^1\ \mathds{1}_A(\hh^{-1}(\alpha))\ d\alpha = \int_0^1\ F\circ\hh^{-1}(\alpha)\ d\alpha,
\end{gather*}

\noindent
which is congruent with (\ref{howtocalculateintegraloverwidehatintegers}). The rest of the argument follows a classical technique. We take the liberty of not transcribing the whole reasoning in detail, but provide just a simple sketch: since (\ref{howtocalculateintegraloverwidehatintegers}) is true for the characteristic functions (as we have just demonstrated) then it is also true for simple functions (with non-negative coefficients). Moreover, given a non-negative function $F\in L^1(\widehat{\integers})$ we approximate it by a nondecreasing sequence of non-negative simple functions (comp. Theorem 2.10 in \cite{FollandRealAnalysis}, p. 47) and (\ref{howtocalculateintegraloverwidehatintegers}) for such $F$ follows from the monotone convergence theorem (comp. Theorem 2.14 in \cite{FollandRealAnalysis}, p. 50). Next, any real-valued integrable function can be represented as a difference of two non-negative integrable functions, so (\ref{howtocalculateintegraloverwidehatintegers}) holds for any real-valued $F\in L^1(\widehat{\integers}).$ Last but not least, every complex-valued $F\in L^1(\widehat{\integers})$ can be decomposed as $F_1+iF_2,$ where $F_1,F_2\in L^1(\widehat{\integers})$ are real-valued. This implies that (\ref{howtocalculateintegraloverwidehatintegers}) holds true for every function in $L^1(\widehat{\integers}).$  
\end{proof}

In the next theorem we calculate the norm $\|\Psi_f\|$ for $f\in L^1(\integers).$ This computation will prove to be invaluable in Corollary \ref{Psinotisometry}, in which we demonstrate the existence of a function $g\in L^1(\integers)$ such that $\|\Psi_g\| < \|g\|_1.$

The theorem below bears a dim resemblance to Corollary 1.2 in \cite{GohbergGoldbergKaashoek}, p. 217. The result is also mentioned (without any reference or proof) in \cite{Kovrizhkin}. However, as we were unable to find a reference with proof for this result, we have taken the liberty of proving it ourselves.

\begin{thm}
If $G=\integers$ then 
$$\forall_{f\in L^1(\integers)}\ \|\Psi_f\| = \|\widehat{f}\|_{\infty},$$

\noindent
where $\|\cdot\|_{\infty}$ denotes the supremum norm. 
\label{normofconvolutionoperator}
\end{thm}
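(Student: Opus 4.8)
The plan is to identify $\Psi_f$ with a multiplication operator on the Fourier side. For $f\in L^1(\integers)$, the group $\widehat{\integers}$ is compact, so $L^2(\widehat{\integers})\subset L^1(\widehat{\integers})$ and the Fourier transform $\mathcal{F}:L^2(\integers)\longrightarrow L^2(\widehat{\integers})$ (the Plancherel transform, an isometric isomorphism once the Haar measures are suitably normalized) is available. The key algebraic fact is that $\mathcal{F}$ intertwines convolution by $f$ with multiplication by $\widehat{f}$: for $\varphi\in L^1(\integers)\cap L^2(\integers)$ one has $\widehat{f\star\varphi} = \widehat{f}\cdot\widehat{\varphi}$, and by (\ref{Psiandconvolution}) the left-hand side is $\widehat{\Psi_f(\varphi)}$. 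Since $L^1(\integers)\cap L^2(\integers)$ is dense in $L^2(\integers)$ and both sides are bounded in $\varphi$, this extends to all of $L^2(\integers)$, giving $\widehat{\Psi_f(\varphi)} = \widehat{f}\cdot\widehat{\varphi}$ for every $\varphi\in L^2(\integers)$. Hence $\Psi_f$ is unitarily equivalent (via $\mathcal{F}$) to the multiplication operator $M_{\widehat{f}}$ on $L^2(\widehat{\integers})$, and $\|\Psi_f\| = \|M_{\widehat{f}}\|$.

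It then remains to compute $\|M_{\widehat{f}}\|$, and here I would use that $f\in L^1(\integers)$ forces $\widehat{f}\in C(\widehat{\integers})$ (the Fourier transform of an $L^1$ function is continuous), so $\widehat{f}$ is a bounded \emph{continuous} function on the compact group $\widehat{\integers}$. The bound $\|M_{\widehat{f}}\|\leqslant\|\widehat{f}\|_\infty$ is immediate from $\|\widehat{f}\cdot\widehat{\varphi}\|_2\leqslant\|\widehat{f}\|_\infty\|\widehat{\varphi}\|_2$. For the reverse inequality, fix $\eps>0$ and pick $\chi_0\in\widehat{\integers}$ with $|\widehat{f}(\chi_0)|>\|\widehat{f}\|_\infty-\eps$; by continuity there is an open neighbourhood $V$ of $\chi_0$ on which $|\widehat{f}|>\|\widehat{f}\|_\infty-\eps$, and since the Haar measure of a nonempty open set in a compact group is strictly positive, $\psi:=\mu_{\widehat{\integers}}(V)^{-1/2}\mathds{1}_V$ is a unit vector in $L^2(\widehat{\integers})$ with $\|M_{\widehat{f}}\psi\|_2^2 = \mu_{\widehat{\integers}}(V)^{-1}\int_V|\widehat{f}|^2\,d\chi\geqslant(\|\widehat{f}\|_\infty-\eps)^2$. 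Letting $\eps\to0$ gives $\|M_{\widehat{f}}\|\geqslant\|\widehat{f}\|_\infty$, and combining with (\ref{howtocalculateintegraloverwidehatintegers}) one may, if desired, rewrite everything concretely as integrals over $[0,1)$ with $\widehat{f}\circ\hh^{-1}(\alpha) = \sum_k f(k)e^{-2\pi i k\alpha}$.

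The main obstacle is bookkeeping rather than ideas: one must be careful that the normalization of the Haar measure on $\widehat{\integers}$ (the normalized one, by the Corollary above) is the one making Plancherel an isometry with the counting measure on $\integers$, so that $\mathcal{F}$ is genuinely unitary and the norm transfers without a stray constant. (If one instead takes the Deitmar–Echterhoff convention, one should check the Plancherel constant is $1$ here or absorb it consistently.) A secondary point worth stating cleanly is the density/continuity argument extending $\widehat{f\star\varphi}=\widehat{f}\cdot\widehat{\varphi}$ from $L^1\cap L^2$ to all of $L^2$ — this is routine but is where the identity (\ref{Psiandconvolution}) is actually used. Everything else, including the positivity of Haar measure on open sets and the approximation by the indicator $\mathds{1}_V$, is standard.
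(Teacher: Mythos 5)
Your proposal is correct, and its backbone is the same as the paper's: pass to the Fourier side via Plancherel, so that $\Psi_f$ becomes multiplication by $\widehat{f}$ on $L^2(\widehat{\integers})$, whence the upper bound $\|\Psi_f\|\leqslant\|\widehat{f}\|_{\infty}$ is immediate. Where you differ is in the lower bound. The paper fixes $\alpha_*\in[0,1)$, tests against normalized indicators of intervals of length $1/n$ shrinking to $\alpha_*$, recovers $|\widehat{f}(\chi_{\alpha_*})|$ in the limit via a de l'H\^ospital (Lebesgue differentiation) argument, and then has to approximate the inverse transforms $\widecheck{\Phi_n}$ by functions in $C_c(\integers)$ and control the error term $\|\widehat{f}\|_{\infty}\cdot\|\widehat{\phi_n}-\Phi_n\|_2$. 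You instead use the continuity of $\widehat{f}$ (as the Fourier transform of an $L^1$ function) to find a single open set $V$ on which $|\widehat{f}|>\|\widehat{f}\|_{\infty}-\eps$, and test against the one normalized indicator $\mu_{\widehat{\integers}}(V)^{-1/2}\mathds{1}_V$; this avoids both the limiting argument and the $C_c(\integers)$-approximation step, at the cost of having to state explicitly (as you do) that the intertwining relation $\widehat{\Psi_f(\varphi)}=\widehat{f}\cdot\widehat{\varphi}$ extends from $L^1(\integers)\cap L^2(\integers)$ to all of $L^2(\integers)$ so that the norm genuinely transfers. Your version is the cleaner of the two and generalizes verbatim to any locally compact abelian group (nothing in your argument uses $G=\integers$ beyond the concrete description of $\widehat{\integers}$); the paper's version has the minor virtue of exhibiting explicit test functions on $\integers$ itself. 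Your cautionary remark about the Plancherel normalization is well placed: with counting measure on $\integers$ and the normalized Haar measure on $\widehat{\integers}$ the Plancherel constant is indeed $1$, so no stray factor appears.
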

\begin{proof}
We fix $f\in L^1(\integers)$ and observe that
\begin{gather}
\|\Psi_f\| = \sup_{\substack{\phi\in C_c(\integers)\\ \|\phi\|_2 = 1}}\ \|\Psi_f(\phi)\|_2 = \sup_{\substack{\phi\in C_c(\integers)\\ \|\phi\|_2 = 1}}\ \|f\star \phi\|_2 
= \sup_{\substack{\phi\in C_c(\integers)\\ \|\phi\|_2 = 1}}\ \|\widehat{f\star\phi}\|_2 = \sup_{\substack{\phi\in C_c(\integers)\\ \|\phi\|_2 = 1}}\ \|\widehat{f}\cdot\widehat{\phi}\|_2, 
\label{calculatethenormpsif}
\end{gather}

\noindent
where we have employed both the Plancherel theorem (comp. Theorem 3.4.8 in \cite{DeitmarEchterhoff}, p. 77 or Theorem 4.25 in \cite{Folland}, p. 99 or Theorem 4.4.1 in \cite{Kaniuth}, p. 219 ) and the property $\widehat{f\star\phi} = \widehat{f}\cdot\widehat{\phi}$ (comp. Theorem 8.3.1 in \cite{Deitmar}, p. 120 or Lemma 1.7.2 in \cite{DeitmarEchterhoff}, p. 30). Furthermore, we have
\begin{gather*}
\forall_{\substack{\phi\in C_c(\integers)\\ \|\phi\|_2 = 1}}\|\widehat{f}\cdot\widehat{\phi}\|_2^2 = \int_{\widehat{\integers}}\ |\widehat{f}(\chi)\cdot\widehat{\phi}(\chi)|^2\ d\chi\leqslant \|\widehat{f}\|_{\infty}^2 \cdot \int_{\widehat{\integers}}\ |\widehat{\phi}(\chi)|^2\ d\chi = \|\widehat{f}\|_{\infty}^2\cdot \|\widehat{\phi}\|_2^2 = \|\widehat{f}\|_{\infty}^2\cdot \|\phi\|_2^2 = \|\widehat{f}\|_{\infty}^2,
\end{gather*}

\noindent
which (coupled with (\ref{calculatethenormpsif})) establishes that $\|\Psi_f\|\leqslant \|\widehat{f}\|_{\infty}.$

For the converse, we fix $\alpha_*\in[0,1)$ and define a sequence of functions $\Phi_n:\widehat{\integers}\longrightarrow\complex$ by
\begin{gather*}
\Phi_n(\chi_{\alpha}) := \left\{\begin{array}{cl}
\sqrt{n} & \text{ if}\ |\alpha-\alpha_*| < \frac{1}{2n},\\
0 & \text{ otherwise.}
\end{array}\right.
\end{gather*}

\noindent
Naturally $\Phi_n\in L^2(\widehat{\integers})$ and we have
\begin{gather}
\|\widehat{f}\cdot \Phi_n\|_2^2 = \int_{\widehat{\integers}}\ |\widehat{f}(\chi)\cdot \Phi_n(\chi)|^2\ d\chi \stackrel{(\ref{howtocalculateintegraloverwidehatintegers})}{=} \int_0^1\ |\widehat{f}\circ \hh^{-1}(\alpha)|^2\cdot |\Phi_n\circ \hh^{-1}(\alpha)|^2\ d\alpha \geqslant n\int_{\alpha_*-\frac{1}{2n}}^{\alpha_*+\frac{1}{2n}}\ |\widehat{f}\circ \hh^{-1}(\alpha)|\ d\alpha.
\label{prepareforHospital}
\end{gather}

\noindent
By a standard application of de l'H\^ospital's rule we conclude that 
\begin{gather}
\limsup_{n\rightarrow\infty}\ \|\widehat{f}\cdot \Phi_n\|_2 \stackrel{(\ref{prepareforHospital})}{\geqslant} |\widehat{f}\circ \hh^{-1}(\alpha_*)| = |\widehat{f}(\chi_{\alpha_*})|.
\label{applicationofHospital} 
\end{gather}

\noindent
By density of $C_c(\integers)$ in $L^2(\integers)$ (comp. Proposition 7.9 in \cite{FollandRealAnalysis}, p. 217) we pick a sequence $(\phi_n)_{n\in\naturals}\subset C_c(\integers)$ such that 
\begin{gather}
\|\phi_n - \widecheck{\Phi_n}\|_2 \stackrel{n\rightarrow\infty}{\longrightarrow} 0,
\label{choiceofphin}
\end{gather}

\noindent
where $\widecheck{\Phi_n}\in L^2(\integers)$ denotes the inverse Fourier transform of $\Phi_n.$ In particular, (\ref{choiceofphin}) means that $\|\phi_n\|\stackrel{n\rightarrow\infty}{\longrightarrow} 1$, since (again by Plancherel theorem) $\|\widecheck{\Phi_n}\|_2 = \|\Phi_n\|_2 = 1$ for every $n\in\naturals.$

Finally, we have
\begin{equation*}
\begin{split}
\|\Psi_f\| &\geqslant \limsup_{n\rightarrow \infty}\ \frac{\|\Psi_f(\phi_n)\|_2}{\|\phi_n\|_2} = \limsup_{n\rightarrow\infty}\ \|f\star\phi_n\|_2 = \limsup_{n\rightarrow\infty}\ \|\widehat{f\star\phi_n}\|_2= \limsup_{n\rightarrow\infty}\ \|\widehat{f}\cdot\widehat{\phi_n}\|_2 \\
&\geqslant \limsup_{n\rightarrow\infty} \bigg( \|\widehat{f}\cdot\Phi_n\|_2 - \|\widehat{f}\cdot (\widehat{\phi_n} - \Phi_n)\|_2\bigg) \geqslant \limsup_{n\rightarrow\infty} \bigg( \|\widehat{f}\cdot\Phi_n\|_2 - \|\widehat{f}\|_{\infty} \cdot \|\widehat{\phi_n} - \Phi_n\|_2\bigg) \\
&= \limsup_{n\rightarrow\infty} \bigg( \|\widehat{f}\cdot\Phi_n\|_2 - \|\widehat{f}\|_{\infty} \cdot \|\phi_n - \widecheck{\Phi_n}\|_2\bigg) \stackrel{(\ref{choiceofphin})}{=} \limsup_{n\rightarrow\infty}\  \|\widehat{f}\cdot\Phi_n\|_2 \stackrel{(\ref{applicationofHospital})}{\geqslant} |\widehat{f}(\chi_{\alpha_*})|.
\end{split}
\end{equation*}

\noindent
As the choice of $\alpha_*\in[0,1)$ (equivalently $\chi_{\alpha_*}\in\widehat{\integers}$) was arbitrary, we are done.
\end{proof}

\begin{cor}
There exists a function $g\in L^1(\integers)$ such that $\|\widehat{g}\|_{\infty} < \|g\|_1,$ and consequently $\|\Psi(g)\| < \|g\|_1.$ 
\label{Psinotisometry}
\end{cor}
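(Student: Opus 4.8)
The plan is to produce a concrete $g\in L^1(\integers)=\ell^1(\integers)$ with $\|\widehat g\|_\infty<\|g\|_1$; granting this, Theorem \ref{normofconvolutionoperator} immediately gives $\|\Psi(g)\|=\|\Psi_g\|=\|\widehat g\|_\infty<\|g\|_1$, which is precisely the claim. Before choosing a candidate it helps to see what cannot work: since $\widehat g(\chi_0)=\sum_{k\in\integers}g(k)$, any $g\geqslant 0$ (indeed any $g$ all of whose nonzero values share one complex phase) satisfies $\|\widehat g\|_\infty\geqslant|\widehat g(\chi_0)|=\|g\|_1$, hence equality; and a $g$ supported on at most two points is equally useless because $\sup_{|z|=1}|a+bz|=|a|+|b|$. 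So the example must genuinely mix signs and be supported on at least three points.

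I would then take the simplest such sequence, namely
$$g:=\mathds{1}_{\{0\}}+\mathds{1}_{\{1\}}-\mathds{1}_{\{2\}},\qquad\text{i.e. }g(0)=g(1)=1,\ g(2)=-1,\ \text{and }g(k)=0\text{ otherwise}.$$
Then $\|g\|_1=3$, while by the definition (\ref{Fouriertransform}) of the Fourier transform on $\integers$ together with the description of the characters $\chi_\alpha$ preceding (\ref{hisomorphism}),
$$\widehat g(\chi_\alpha)=1+e^{-2\pi i\alpha}-e^{-4\pi i\alpha}\qquad\text{for }\alpha\in[0,1).$$

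The computational heart of the argument is short: multiplying $\widehat g(\chi_\alpha)$ by the unimodular factor $e^{2\pi i\alpha}$ does not change its modulus, and
$$e^{2\pi i\alpha}\,\widehat g(\chi_\alpha)=e^{2\pi i\alpha}+1-e^{-2\pi i\alpha}=1+2i\sin(2\pi\alpha).$$
Hence $|\widehat g(\chi_\alpha)|=\sqrt{1+4\sin^2(2\pi\alpha)}\leqslant\sqrt 5$ for every $\alpha\in[0,1)$, so $\|\widehat g\|_\infty=\sqrt 5<3=\|g\|_1$ (the supremum $\sqrt5$ is in fact attained, e.g. at $\alpha=\tfrac14$). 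Plugging this into Theorem \ref{normofconvolutionoperator} yields $\|\Psi(g)\|=\|\widehat g\|_\infty=\sqrt5<\|g\|_1$, which finishes the proof.

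Honestly there is no serious obstacle; the only ``difficulty'' is the psychological one of realising that the most natural guesses (non-negative sequences, or sequences living on two points) are exactly those that give equality, so that one is forced toward a mixed-sign sequence of length three $-$ after which the verification collapses to one line. If a softer, non-constructive route were preferred, one could instead argue that were $\|\widehat f\|_\infty=\|f\|_1$ to hold for all $f\in L^1(\integers)$, the Fourier transform would embed $L^1(\integers)$ isometrically, hence as a closed subspace, into $C(\widehat{\integers})$; but the image contains every trigonometric polynomial and is therefore dense, which would force it to be all of $C(\widehat{\integers})$ $-$ impossible, since there exist continuous functions on the circle whose Fourier series do not converge absolutely. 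I nevertheless favour the explicit example, as it is self-contained and leans only on Theorem \ref{normofconvolutionoperator}.
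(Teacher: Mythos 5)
Your proposal is correct and uses exactly the same example as the paper, $g=\mathds{1}_{\{0\}}+\mathds{1}_{\{1\}}-\mathds{1}_{\{2\}}$, arriving at the identical bound $\|\widehat g\|_\infty\leqslant\sqrt5<3=\|g\|_1$; your trick of multiplying by the unimodular factor $e^{2\pi i\alpha}$ to get $1+2i\sin(2\pi\alpha)$ is just a tidier route to the paper's expression $3-2\cos(4\pi y)$. The surrounding remarks (why non-negative or two-point sequences cannot work, and the non-constructive alternative via the Wiener algebra) are sound but not needed.
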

\begin{proof}
We put
\begin{gather*}
g(n) := \left\{\begin{array}{cl}
1 & \text{ for }n=0,1 \\
-1 & \text{ for }n=2 \\
0 & \text{ otherwise}. \\
\end{array}\right.
\end{gather*}

\noindent
Obviously $g\in L^1(\integers)$ with $\|g\|_1 = 3.$ Subsequently, we perform the following calculation:
\begin{gather*}
\forall_{y\in[0,1]}\ |\widehat{g}(y)|^2 = \left|\sum_{n\in\integers}\ g(n)\cdot \exp(-2\pi iny)\right|^2 =  \left|1 + \exp(-2\pi iy) - \exp(-4\pi iy)\right|^2\\
= \left|1+\cos(2\pi y)-\cos(4\pi y) - i\big(\sin(2\pi y) - \sin(4\pi y)\big)\right|^2 \\
= \big(1+\cos(2\pi y)-\cos(4\pi y)\big)^2 + \big(\sin(2\pi y) - \sin(4\pi y)\big)^2\\
= 3-2\cos(4\pi y) \leqslant 5, 
\end{gather*}

\noindent
where we have ommitted a number of trivial applications of trigonometric identities. The above estimate demonstrates that 
$$\|\widehat{g}\|_{\infty} \leqslant \sqrt{5} < 3 = \|g\|_1,$$

\noindent
which concludes the proof.
\end{proof}

A rather sad upshot of Corollary \ref{Psinotisometry} is that on some groups (like $\integers$ as we demonstrated), the operator $\Psi$ is not an isometry. Nevertheless, we use this map to define the \textit{group $C^*-$algebra} as the norm$-$closure of $\Psi(L^1(G))$ inside the $C^*-$algebra $\BB(L^2(G)).$ Since $\Psi$ is an injective algebra $*-$homomorphism between $L^1(G)$ and $\BB(L^2(G))$ (comp. Lemma 3.3.2 in \cite{DeitmarEchterhoff}, p. 70 or Theorem 2.7.7 in \cite{Kaniuth}, p. 95) then $\Psi(L^1(G))$ is a commutative, normed $*-$subalgebra of $\BB(L^2(G)),$ which satisfies the $C^*-$property. Taking the closure simply ``completes'' $\Psi(L^1(G))$ in $\BB(L^2(G))$, turning it into a commutative $C^*-$algebra, which we denote by $C^*(G).$




\section{Arzel\`a, Ascoli and Pego}
\label{Chapter:ArzelaAscoliPego}

At last, we have arrived at the main chapter of the paper. We already have the proper ``canvas'' to work with, namely the $C^*-$algebra $C^*(G),$ where $G$ is (as always) a locally compact abelian group. As we demonstrate in the first theorem of the current chapter, $C^*(G)$ is \textit{de facto} the space of continuous functions (on the dual group $\widehat{G}$), which vanish at infinity. Therefore, employing a correct version of the Arzel\`a-Ascoli theorem we should be able to characterize relatively compact families in $C^*(G)$. This approach will bear fruit in the form of Pego theorem for $L^1(G),$ i.e. a characterization of relatively compact families in $L^1(G)$ via the Fourier transform. 


\begin{thm}
There exists an isometric $*-$isomorphism $\Omega$ between $C_0(\widehat{G})$ and $C^*(G).$
\label{C0widehatGCstarG}
\end{thm}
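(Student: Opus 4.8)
The plan is to manufacture $\Omega$ on a dense subalgebra and then extend it by continuity, the whole argument hinging on the norm identity $\|\Psi(f)\|=\|\widehat f\|_{\infty}$ for every $f\in L^1(G)$ (the general analogue of Theorem \ref{normofconvolutionoperator}). Before that I would collect the ``soft'' ingredients. The Fourier transform $f\mapsto\widehat f$ maps $L^1(G)$ into $C_0(\widehat G)$ (Riemann--Lebesgue lemma) and is injective (uniqueness theorem); by the convolution theorem it satisfies $\widehat{f\star g}=\widehat f\cdot\widehat g$, and a one-line substitution $x\mapsto -x$ in (\ref{Fouriertransform}) gives $\widehat{f^{*}}=\overline{\widehat f}$. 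Consequently $\widehat{L^1(G)}$ is a self-adjoint subalgebra of $C_0(\widehat G)$; it separates points (distinct characters are told apart by some $\widehat f$) and vanishes nowhere (for each $\chi\in\widehat G$ there is $f$ with $\widehat f(\chi)\neq 0$), so by the Stone--Weierstrass theorem $\widehat{L^1(G)}$ is dense in $C_0(\widehat G)$.

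The heart of the proof is the norm identity, which I would establish by diagonalizing $\Psi(f)$ via the Plancherel transform $\mathcal F_2\colon L^2(G)\to L^2(\widehat G)$. For $\varphi\in L^1(G)\cap L^2(G)$, Young's inequality puts $f\star\varphi$ in $L^1(G)\cap L^2(G)$ and the convolution theorem yields $\mathcal F_2(f\star\varphi)=\widehat{f\star\varphi}=\widehat f\cdot\widehat\varphi=\widehat f\cdot\mathcal F_2(\varphi)$; since both sides depend boundedly on $\varphi$ and $L^1(G)\cap L^2(G)$ is dense in $L^2(G)$, we obtain $\mathcal F_2\,\Psi(f)=M_{\widehat f}\,\mathcal F_2$ on all of $L^2(G)$, where $M_{\widehat f}$ is multiplication by $\widehat f$ on $L^2(\widehat G)$. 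As $\mathcal F_2$ is unitary, $\|\Psi(f)\|=\|M_{\widehat f}\|_{\BB(L^2(\widehat G))}$, and this norm equals $\|\widehat f\|_{\infty}$: the inequality ``$\leqslant$'' is immediate from $|\widehat f|\leqslant\|\widehat f\|_{\infty}$, while for ``$\geqslant$'' one uses that $\widehat f$ is continuous and that the Haar measure on $\widehat G$ has full support, so a compact subset of positive finite measure lying inside $\{\,|\widehat f|>\|\widehat f\|_{\infty}-\eps\,\}$ gives, after normalization, a unit vector on which $M_{\widehat f}$ nearly attains $\|\widehat f\|_{\infty}$ --- this is exactly the role of the functions $\Phi_n$ in the proof of Theorem \ref{normofconvolutionoperator}.

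With the norm identity available I would define $\Omega$ on the dense subalgebra $\widehat{L^1(G)}\subset C_0(\widehat G)$ by $\Omega(\widehat f):=\Psi(f)$. This is well defined by injectivity of the Fourier transform, and it is an algebra $*$-homomorphism: multiplicativity follows from $\Omega(\widehat f\cdot\widehat g)=\Omega(\widehat{f\star g})=\Psi(f\star g)=\Psi(f)\Psi(g)$, and $*$-preservation from $\Omega(\overline{\widehat f})=\Omega(\widehat{f^{*}})=\Psi(f^{*})=\Psi(f)^{*}$. The norm identity says precisely that $\Omega$ is isometric, so it extends uniquely to a linear isometry $\Omega\colon C_0(\widehat G)\to C^*(G)$, which remains multiplicative and $*$-preserving by continuity. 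Its range is complete, hence closed in $C^*(G)$, and contains the dense set $\Psi(L^1(G))$; therefore $\Omega$ is surjective, and the proof is finished.

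The step I expect to be genuinely substantial is the lower bound $\|M_{\widehat f}\|\geqslant\|\widehat f\|_{\infty}$ --- equivalently, the statement that the left regular representation of $G$ on $L^2(G)$ already ``sees'' every character of $G$ --- together with the careful verification that $\Psi(f)=\mathcal F_2^{-1}M_{\widehat f}\mathcal F_2$; the rest is formal. One could also argue more in the spirit of Chapter \ref{Chapter:Banachalgebras}: apply the Gelfand--Naimark theorem to the commutative $C^*$-algebra $C^*(G)$ to get $C^*(G)\cong C_0(\Delta_{C^*(G)})$, and then identify the structure space $\Delta_{C^*(G)}$ with $\widehat G$ through $m\mapsto m\circ\Psi$; but the surjectivity of that correspondence rests on the very same norm identity, so the route above seems the most economical.
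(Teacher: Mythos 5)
Your proof is correct, but it travels a genuinely different road from the paper's. The paper builds $\Omega$ ``from the top down'': it cites the homeomorphisms $\Psi_{\Delta}:\Delta_{C^*(G)}\longrightarrow\Delta_{L^1(G)}$ and $M:\widehat{G}\longrightarrow\Delta_{L^1(G)}$ from Deitmar--Echterhoff, notes that a homeomorphism of locally compact spaces induces an isometric $*$-isomorphism of the corresponding $C_0$-algebras, and then composes with the Gelfand transform $\Gamma:C^*(G)\longrightarrow C_0(\Delta_{C^*(G)})$ supplied by Gelfand--Naimark, setting $\Omega=\Gamma^{-1}\circ\Theta$. You instead build $\Omega$ ``from the bottom up'': you define it on the dense $*$-subalgebra $\widehat{L^1(G)}\subset C_0(\widehat{G})$ (dense by Stone--Weierstrass) via $\Omega(\widehat{f}):=\Psi(f)$, prove the norm identity $\|\Psi(f)\|=\|\widehat{f}\|_{\infty}$ by conjugating $\Psi(f)$ into a multiplication operator through the Plancherel unitary, and extend by continuity; surjectivity is then automatic since the closed range contains the dense set $\Psi(L^1(G))$. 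The two arguments are not as far apart as they look --- the hard content of your norm identity is exactly what is buried inside the claim that $\Psi_{\Delta}$ is a bijection (the inequality $|\widehat{f}(\chi)|\leqslant\|\Psi(f)\|$ is what lets each $m_{\chi}\circ\Psi^{-1}$ extend continuously to $C^*(G)$), and you say as much in your closing remark. What your route buys is self-containedness (it proves the general analogue of Theorem \ref{normofconvolutionoperator} rather than establishing it only for $G=\integers$) and the identity $\Omega^{-1}\circ\Psi(f)=\widehat{f}$ \emph{by construction}, which the paper must later verify by unwinding (\ref{definitionPsiDelta})--(\ref{definitionM}) in the proof of Theorem \ref{Pegotheorem}; what the paper's route buys is brevity, since all the analytic work is delegated to cited results and the Gelfand--Naimark machinery already assembled in Chapter \ref{Chapter:Banachalgebras}.
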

\begin{proof}
By Theorem 3.3.3 in \cite{DeitmarEchterhoff}, p. 71 we know that the map 
\begin{gather}
\Psi_{\Delta} : \Delta_{C^*(G)}\longrightarrow \Delta_{L^1(G)}\hspace{0.4cm} \text{given by}\hspace{0.4cm} \Psi_{\Delta}(m) := m\circ \Psi
\label{definitionPsiDelta}
\end{gather}

\noindent
is a homeomorphism. We easily derive an explicit formula for the inverse of $\Psi_{\Delta}:$
\begin{gather}
\forall_{m\in \Delta_{L^1(G)}}\ \Psi_{\Delta}^{-1}(m) = m\circ \Psi^{-1}.
\label{inversePsi}
\end{gather} 

\noindent
By Theorem 3.2.1 in \cite{DeitmarEchterhoff}, p. 67 the map $M:\widehat{G}\longrightarrow \Delta_{L^1(G)}$ given by
\begin{equation}
M(\chi) := m_{\chi},\hspace{0.4cm} \text{where}\hspace{0.4cm} m_{\chi}(f) := \widehat{f}(\chi) \stackrel{(\ref{Fouriertransform})}{=} \int_G\ f(x)\cdot \overline{\chi(x)}\ dx,
\label{definitionM}
\end{equation}

\noindent
is a homeomorphism. Consequently, the map $M^{-1}\circ \Psi_{\Delta}$ is a homeomorphism between $\Delta_{C^*(G)}$ and $\widehat{G}.$ As in Corollary 2.2.13 in \cite{Kaniuth}, p. 57 we establish that the map 
\begin{gather}
\Theta: C_0(\widehat{G})\longrightarrow C_0\left(\Delta_{C^*(G)}\right)\hspace{0.4cm} \text{given by} \hspace{0.4cm} \Theta(F) := F \circ M^{-1}\circ \Psi_{\Delta}
\label{definitionTheta}
\end{gather}

\noindent
is an isometric algebra isomorphism. Furthermore, we easily note that 
$$\Theta(F^*) = F^*\circ M^{-1}\circ \Psi_{\Delta} = \overline{F\circ M^{-1}\circ \Psi_{\Delta}} = \Theta(F)^*,$$

\noindent
so $\Theta$ is in fact an isometric $*-$isomorphism. Again, we easily derive an explicit formula for the inverse of $\Theta:$
\begin{gather}
\forall_{f\in C_0\left(\Delta_{C^*(G)}\right)}\ \Theta^{-1}(f) = f\circ \Psi_{\Delta}^{-1}\circ M.
\label{inverseTheta}
\end{gather}

By the celebrated Gelfand-Naimark theorem (comp. Theorem 7.1 in \cite{BelfiDoran}, p. 22 or Theorem I.3.1 in \cite{Davidson}, p. 7 or Theorem 2.6.7 in \cite{DeitmarEchterhoff}, p. 53 or Theorem 2.4.5 in \cite{Kaniuth}, p. 69), the Gelfand transform $\Gamma: C^*(G)\longrightarrow C_0(\Delta_{C^*(G)})$ given by
$$\Gamma(f) := \widehat{f}, \hspace{0.4cm}\text{where}\hspace{0.4cm} \widehat{f}(m) := m(f),$$

\noindent
is an isometric $*-$isomorphism. In conclusion, the map $\Omega : C_0(\widehat{G})\longrightarrow C^*(G)$ given by
\begin{gather}
\Omega:= \Gamma^{-1}\circ \Theta
\label{definitionofOmega}
\end{gather}

\noindent
is an isometric $*$-isomorphism.
\end{proof}

The relation between the newly-introduced map $\Omega$ and the convolution operator $\Psi$ (the injective $*-$homomorphism constructed in Chapter \ref{Chapter:Cstaralgebra}) is pictured on the diagram below:
$$L^1(G) \stackrel{\Psi}{\longrightarrow} \Psi(L^1(G))\subset C^*(G) \stackrel{\Omega^{-1}}{\longrightarrow} C_0(\widehat{G}),$$

\noindent
Our approach to characterize relatively compact families in $L^1(G)$ is rather straightforward $-$ the key idea is that 
\begin{center}
\textit{relative compactness of $\Ffamily\subset L^1(G)$ is equivalent (because $\Psi$ and $\Omega^{-1}$ are homeomorphisms) to relative compactness of $\Omega^{-1}\circ \Psi(\Ffamily)\subset C_0(\widehat{G})$.}
\end{center}

\noindent
Hence, characterizing relatively compact families in $L^1(G)$ should not be more difficult than finding the Arzel\`a-Ascoli theorem for $C_0(\widehat{G})$. 

The classic version of Arzel\`a-Ascoli theorem can be found in countless sources: Theorem 4.25 in \cite{Brezis}, p. 111 or Theorem 23.2 in \cite{Choquet}, p. 99 or Theorem 6.3.1 in \cite{Dixmiertopology}, p. 69 or Theorem 4.43 in \cite{FollandRealAnalysis}, p. 137 or Corollary 10.49 in \cite{Knapp}, p. 479 or Theorem 11.28 in \cite{Rudinrealandcomplex}, p. 245 or Theorem A5 in \cite{Rudin}, p. 394 etc.

\begin{thm}(classic version of Arzel\`a-Ascoli theorem)\\
Let $X$ be a compact (Hausdorff) space. The family $\Ffamily\subset C(X)$ is relatively compact if and only if 
\begin{itemize}
	\item $\Ffamily$ is pointwise bounded, i.e. for every $x\in X$ there exists $M_x>0$ such that 
	$$\forall_{f\in\Ffamily}\ |f(x)| < M_x,$$
	\item $\Ffamily$ is equicontinuous at every point, i.e. for every $x\in X$ and $\eps>0$ there exists an open neighbourhood $U_x$ of $x$ such that 
	$$\forall_{\substack{y\in U_x,\\ f\in\Ffamily}}\ |f(y)- f(x)| < \eps.$$
\end{itemize}
\label{classicalAA}
\end{thm}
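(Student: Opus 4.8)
The plan is to lean on the completeness of $C(X)$ under the supremum norm $\|\cdot\|_\infty$: in a complete metric space a set is relatively compact if and only if it is totally bounded, so I would everywhere translate ``relatively compact'' into ``totally bounded''. I write $\mathrm{ev}_x(f):=f(x)$ for the evaluation functional, which is $1$-Lipschitz and hence continuous on $C(X)$.

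For the ``only if'' direction, suppose $\overline{\Ffamily}$ is compact. Being compact in a metric space it is bounded, which yields at once the pointwise (indeed uniform) bound. For equicontinuity at a point $x$, fix $\eps>0$ and use total boundedness of $\overline{\Ffamily}$ to get a finite $\tfrac{\eps}{3}$-net $f_1,\dots,f_k\in C(X)$. Each $f_j$ is continuous at $x$, so there is a neighbourhood $U_j$ of $x$ on which $f_j$ oscillates by less than $\tfrac{\eps}{3}$; set $U_x:=\bigcap_{j=1}^{k}U_j$. Given $f\in\Ffamily$ choose $j$ with $\|f-f_j\|_\infty<\tfrac{\eps}{3}$ and estimate, for $y\in U_x$, $|f(y)-f(x)|\le|f(y)-f_j(y)|+|f_j(y)-f_j(x)|+|f_j(x)-f(x)|<\eps$. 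Since $f\in\Ffamily$ was arbitrary, $\Ffamily$ is equicontinuous at $x$.

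For the ``if'' direction, assume $\Ffamily$ is pointwise bounded and equicontinuous. I would first upgrade pointwise boundedness to uniform boundedness: equicontinuity gives, for each $x$, a neighbourhood $W_x$ on which every member of $\Ffamily$ oscillates by less than $1$; compactness of $X$ extracts a finite subcover $W_{x_1},\dots,W_{x_n}$, so that $\|f\|_\infty\le 1+\max_{1\le i\le n}M_{x_i}$ for all $f\in\Ffamily$. Next I would verify total boundedness. Given $\eps>0$, equicontinuity furnishes for each $x$ a neighbourhood $V_x$ on which all members of $\Ffamily$ oscillate by less than $\tfrac{\eps}{3}$; by compactness choose a finite subcover $V_{x_1},\dots,V_{x_m}$. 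The set $\{(f(x_1),\dots,f(x_m)):f\in\Ffamily\}$ is a bounded subset of $\complex^m$, hence totally bounded; cover it by finitely many balls of radius $\tfrac{\eps}{3}$ and pick one function $g_1,\dots,g_p\in\Ffamily$ lying in each nonempty such ``cell''. These form an $\eps$-net: given $f\in\Ffamily$, choose $l$ with $|f(x_i)-g_l(x_i)|<\tfrac{\eps}{3}$ for every $i$; for arbitrary $y\in X$ there is $i$ with $y\in V_{x_i}$, and the three-$\eps$ estimate $|f(y)-g_l(y)|\le|f(y)-f(x_i)|+|f(x_i)-g_l(x_i)|+|g_l(x_i)-g_l(y)|<\eps$ shows $\|f-g_l\|_\infty\le\eps$. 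Completeness of $C(X)$ then converts total boundedness into relative compactness.

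The only genuinely delicate point is that $X$ is merely compact Hausdorff rather than metrizable, so the familiar Cantor-diagonal / sequential-compactness argument is unavailable; the total-boundedness route above avoids it by passing to \emph{finite} subcovers at each stage and thereby reducing control of $f$ on all of $X$ to control at the finitely many sample points $x_1,\dots,x_m$. The remaining work is routine three-$\eps$ bookkeeping together with the elementary facts that bounded subsets of $\complex^m$ are totally bounded and that closures of totally bounded sets are totally bounded.
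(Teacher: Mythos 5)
Your proof is correct. Note that the paper does not actually prove Theorem \ref{classicalAA} at all --- it only points to standard references --- and the machinery it builds for its own Arzel\`a--Ascoli result (Theorem \ref{AAforC0X}) is the Kelley--Munkres ``collapse of topologies'' argument: relative compactness of a pointwise bounded, equicontinuous family is first obtained in the topology of pointwise convergence via Tychonoff's theorem, and equicontinuity is then used to show that $\tau_{pc}$ and $\tau_{ucc}$ (which for compact $X$ equals $\tau_{uc}$) coincide on the family. Your route is genuinely different: you work entirely inside the complete metric space $(C(X),\|\cdot\|_{\infty})$, translate relative compactness into total boundedness, and reduce everything to finite $\eps$-nets and finite subcovers, sampling $f$ at finitely many points $x_1,\dots,x_m$. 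This is more elementary and fully quantitative --- no product topologies, no Tychonoff --- and is essentially the argument found in Brezis or Rudin among the paper's cited sources. What the paper's topological approach buys is that it transfers almost verbatim to $C_0(X)$ for non-compact locally compact $X$ once the equivanishing hypothesis is added to collapse $\tau_{ucc}$ onto $\tau_{uc}$; your argument would also extend, but you would have to re-insert the compact set from equivanishing by hand at the sampling stage. One cosmetic point: if $f$ and $g_l$ both lie in a ball of radius $\tfrac{\eps}{3}$ in $\complex^m$, the triangle inequality only gives $|f(x_i)-g_l(x_i)|<\tfrac{2\eps}{3}$, so your final estimate is $\tfrac{4\eps}{3}$ rather than $\eps$; this is harmless (use balls of radius $\tfrac{\eps}{6}$, or observe that producing a $C\eps$-net for every $\eps>0$ with a fixed constant $C$ still yields total boundedness).
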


However, in our case the domain $X = \widehat{G}$ need not be compact. In fact, if $\widehat{G}$ is compact then $G$ must be finite (comp. Proposition 4.35 in \cite{Folland}, p. 103), which we do not assume. Despite best efforts, we were unable to find a good reference for a theorem characterizing relatively compact families in $C_0(X),$ where $X$ is a locally compact group. Even worse, we came across a version of Arzel\`a-Ascoli theorem that is palpably wrong!

\begin{thm}(comp. Theorem A.1.4 in \cite{Kaniuth}, p. 320)\\
Let $X$ be a locally compact Hausdorff space and $\Ffamily\subset C_0(X).$ Suppose that $\Ffamily$ satisfies the following two conditions:
\begin{itemize}
	\item $\Ffamily$ is pointwise bounded,
	\item $\Ffamily$ is equicontinuous at every point.
\end{itemize}

\noindent
Then $\Ffamily$ is relatively compact in $(C_0(X),\|\cdot\|_{\infty})$.
\label{wrongtheorem}
\end{thm}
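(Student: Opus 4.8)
The statement as written is false, so the plan is not to prove it but to refute it with an explicit counterexample; the morally correct repair is to add a uniform-decay hypothesis (\emph{equivanishing}, in the spirit of Theorem~\ref{classicalPego}): for every $\eps>0$ there should exist a compact set $K\Subset X$ with $|f(x)|<\eps$ for all $x\notin K$ and all $f\in\Ffamily$. Without it, pointwise boundedness and equicontinuity are perfectly compatible with the ``bumps'' of the functions drifting off to infinity, and such a family, though locally tame, cannot be relatively compact in the uniform norm.

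Concretely, I would work on $X=\reals$ (which is exactly the dual group $\widehat{\reals}$, hence of the relevant type). Fix one nonzero $\phi\in C_c(\reals)$ supported in $(-\tfrac14,\tfrac14)$ and put $f_n(x):=\phi(x-n)$ for $n\in\naturals$, with $\Ffamily:=\{f_n:n\in\naturals\}$. Three points then need checking, all routine: each $f_n$ lies in $C_c(\reals)\subset C_0(\reals)$; $\Ffamily$ is (uniformly, hence pointwise) bounded, since $|f_n(x)|\leqslant\|\phi\|_{\infty}$ and for each fixed $x$ at most one $f_n$ is nonzero at $x$; and $\Ffamily$ is (uniformly, hence pointwise) equicontinuous, because every $f_n$ is a translate of the single uniformly continuous function $\phi$ and therefore inherits its modulus of continuity.

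The point is that $\Ffamily$ is nevertheless \emph{not} relatively compact in $(C_0(\reals),\|\cdot\|_{\infty})$: this space is complete, so relative compactness is equivalent to total boundedness, but for $m\neq n$ the supports of $f_m$ and $f_n$ are disjoint, whence $\|f_m-f_n\|_{\infty}=\|\phi\|_{\infty}$ is a fixed positive constant; thus no subsequence of $(f_n)$ is Cauchy and no finite $\tfrac12\|\phi\|_{\infty}$-net exists. (An even more stripped-down version: take $X=\integers$ and $f_n:=\mathds{1}_{\{n\}}$, where equicontinuity is automatic.) There is no genuine obstacle here beyond vigilance: the only things one must get right are that ``relatively compact'' in the uniform norm means ``totally bounded'', and that the disjoint-support estimate really does destroy total boundedness. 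The constructive upshot, which the paper then develops, is that the statement becomes correct once equivanishing is imposed, and that this corrected Arzel\`a--Ascoli theorem on $C_0(\widehat{G})$ is precisely what is needed — via the homeomorphisms $\Psi$ and $\Omega^{-1}$ — to obtain the $L^1(G)$ counterpart of Pego's theorem.
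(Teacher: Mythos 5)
You have correctly recognized that the statement is false and refuted it by counterexample, which is exactly what the paper does: your ``stripped-down version'' on $X=\integers$ with $f_n=\mathds{1}_{\{n\}}$ is precisely the paper's own refutation, and your translated-bump family on $\reals$ is just its continuous analogue. Your identification of equivanishing as the missing hypothesis also matches the paper's subsequent repair, so the proposal is correct and follows essentially the same route.
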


If this theorem were true then it would work in particular for $X = \integers$. We observe that for such a choice of $X$, the equicontinuity condition becomes obsolete $-$ every family $\Ffamily\subset C_0(\integers)$ is equicontinuous since we may always pick $U_x = \{x\}.$ Thus it suffices to consider a sequence of characteristic functions $\Ffamily = \left(\mathds{1}_{\{n\}}\right)_{n\in\naturals},$ which is obviously pointwise bounded. However, this sequence does not contain any convergent subsequence, so $\Ffamily$ cannot be relatively compact (contrary to what Theorem \ref{wrongtheorem} implies)! This demonstrates that Theorem \ref{wrongtheorem} cannot be true.

In the pursuit of the proper characterization of relatively compact families in $C_0(X)$ we came across Exercise 17 on page 182 in John B. Conway's ``A Course in Functional Analysis'' (comp. \cite{Conway}), which is likely the closest to what we need. Unfortunately, Conway does not provide the proof, leaving this task to the reader. Discouraged by the fruitless search for a proper reference, we have decided to provide a full proof of the Arzel\`a-Ascoli theorem for $C_0(X)$ ourselves. Our approach is inspired by Chapter 7 in Kelley's ``General Topology'' (comp. \cite{Kelley}) as well as Chapters 46 and 47 in Munkres' ``Topology'' (comp. \cite{Munkres}). In their monographs, Kelley and Munkres employ the technique of ``\textit{collapsing topologies}'', which we briefly summarize. There are three main topologies on $C_0(X):$
\begin{itemize}
	\item \textit{Topology of pointwise convergence} $\tau_{pc}.$ The family
	$$\{f\in C_0(X)\ :\ |f(x) - f_*(x)| < \eps\}_{x\in X, f_*\in C_0(X), \eps>0}$$
	
	\noindent
	forms a subbase for this topology (comp. \cite{Munkres}, p. 281).
	\item \textit{Topology of uniform convergence on compact sets} $\tau_{ucc}.$ The family 
	$$\{f\in C_0(X)\ :\ \forall_{x\in K}\ |f(x)-f_*(x)| < \eps\}_{K\Subset X, f_*\in C_0(X), \eps>0}$$
	
	\noindent
	forms a base for this topology (comp. \cite{Munkres}, p. 283).
	
	\item \textit{Topology of uniform convergence} $\tau_{uc}.$ The family 
	$$\{f\in C_0(X)\ :\ \forall_{x\in X}\ |f(x)-f_*(x)| < \eps\}_{f_*\in C_0(X), \eps>0}$$
	
	\noindent
	forms a base for this topology.
\end{itemize}

If we consider $C_0(X)$ with the topology of pointwise convergence $\tau_{pc},$ then the Arzel\`a-Ascoli theorem is equivalent to Tychonoff's theorem (comp. Theorem 1 in \cite{Kelley}, p. 218). The crucial observation is that if $\Ffamily\subset C_0(X)$ is equicontinuous, then $\tau_{pc}$ and $\tau_{ucc}$ coincide on $\Ffamily$ (comp. Theorem 15 in \cite{Kelley}, p. 232). We say, rather informally, that the topologies ``collapse onto each other'' on $\Ffamily$. This line of reasoning lies at the heart of Arzel\`a-Ascoli theorem for $C_0(X)$ with the topology $\tau_{ucc}.$ 

At this point it is at least plausible to believe that there exists some condition on the family $\Ffamily\subset C_0(X)$ under which the topologies $\tau_{ucc}$ and $\tau_{uc}$ coincide. We simply need to work out what this condition might be.

\begin{lemma}
Let $X$ be a locally compact space. If the family $\Ffamily\subset C_0(X)$ is equivanishing, i.e.
\begin{gather}
\forall_{\eps>0}\ \exists_{K\Subset X}\ \forall_{f\in\Ffamily}\ \sup_{x\in X\backslash K}\ |f(x)| < \eps,
\label{assumptionequivanishing}
\end{gather}

\noindent
then the topologies $\tau_{ucc}$ and $\tau_{uc}$ coincide on $\Ffamily,$ i.e. $\tau_{co}|_{\Ffamily} = \tau_{uc}|_{\Ffamily}.$ 
\label{equivanishingandtopologycollapse}
\end{lemma}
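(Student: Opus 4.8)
The plan is to compare the two subspace topologies point by point, i.e.\ to show that $\tau_{ucc}$ and $\tau_{uc}$ induce the same neighbourhood filter at every $g\in\Ffamily$; since a topology is determined by the neighbourhood bases at its points, this gives $\tau_{ucc}|_{\Ffamily}=\tau_{uc}|_{\Ffamily}$. One inclusion is free of any hypothesis: a basic $\tau_{ucc}$-neighbourhood $\{f\in C_0(X):\sup_{x\in K}|f(x)-g(x)|<\eps\}$ contains the basic $\tau_{uc}$-neighbourhood $\{f\in C_0(X):\sup_{x\in X}|f(x)-g(x)|<\eps\}$, so $\tau_{ucc}|_{\Ffamily}\subseteq\tau_{uc}|_{\Ffamily}$ always holds (uniform convergence is finer than uniform convergence on compacta). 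The entire content of the lemma is therefore the reverse inclusion $\tau_{uc}|_{\Ffamily}\subseteq\tau_{ucc}|_{\Ffamily}$, and this is precisely where the equivanishing assumption (\ref{assumptionequivanishing}) is used.

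For the reverse inclusion, fix $g\in\Ffamily$ and $\eps>0$. Because the sets $\{f\in\Ffamily:\sup_{x\in K}|f(x)-g(x)|<\delta\}$ (over $K\Subset X$, $\delta>0$) form a neighbourhood base of $g$ for $\tau_{ucc}|_{\Ffamily}$, while the sets $\{f\in\Ffamily:\sup_{x\in X}|f(x)-g(x)|<\eps\}$ (over $\eps>0$) form a neighbourhood base of $g$ for $\tau_{uc}|_{\Ffamily}$, it suffices to exhibit a single $K\Subset X$ and $\delta>0$ with
$$\{f\in\Ffamily:\sup_{x\in K}|f(x)-g(x)|<\delta\}\ \subseteq\ \{f\in\Ffamily:\sup_{x\in X}|f(x)-g(x)|<\eps\}.$$
Apply (\ref{assumptionequivanishing}) with $\tfrac{\eps}{3}$ in place of $\eps$ to obtain a compact $K\Subset X$ such that $\sup_{x\in X\setminus K}|f(x)|<\tfrac{\eps}{3}$ for every $f\in\Ffamily$, and set $\delta:=\tfrac{\eps}{3}$. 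If $f\in\Ffamily$ satisfies $\sup_{x\in K}|f(x)-g(x)|<\tfrac{\eps}{3}$, then on $K$ one has $|f(x)-g(x)|<\tfrac{\eps}{3}$, whereas on $X\setminus K$ one estimates $|f(x)-g(x)|\le|f(x)|+|g(x)|<\tfrac{\eps}{3}+\tfrac{\eps}{3}$, the bound on $|g(x)|$ being legitimate because $g$ itself belongs to $\Ffamily$. Consequently $\sup_{x\in X}|f(x)-g(x)|\le\tfrac{2\eps}{3}<\eps$, which is exactly the required containment.

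The argument is elementary and I do not anticipate a genuine obstacle; the only points deserving care are the bookkeeping that reduces ``the subspace topologies coincide'' to the containment of basic neighbourhoods described above, and the easily overlooked observation that the centre $g$ of the neighbourhood lies in $\Ffamily$, so that the cross term $|g(x)|$ on $X\setminus K$ is controlled by the \emph{same} equivanishing estimate as the running $f$ --- without this the off-$K$ part of the supremum could not be bounded. Note finally that local compactness of $X$ plays no active role in this lemma beyond fixing the ambient setting: everything is done with the one compact set $K$ furnished by equivanishing.
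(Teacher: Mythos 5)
Your proposal is correct and follows essentially the same route as the paper: both reduce to the nontrivial inclusion $\tau_{uc}|_{\Ffamily}\subseteq\tau_{ucc}|_{\Ffamily}$ and prove it by splitting $\sup_X|f-g|$ over $K$ and $X\setminus K$, controlling the complement via equivanishing applied to both $f$ and the centre $g\in\Ffamily$. The only difference is bookkeeping: you re-centre the $\tau_{uc}$-neighbourhood at $g$ first and run an $\eps/3$ argument, while the paper keeps an arbitrary centre $f_*\in C_0(X)$ and absorbs $d=\|g-f_*\|_\infty$ via the choice $3\delta+d<\eps$ --- the substance is identical.
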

\begin{proof}
By Theorem 46.7 in \cite{Munkres}, p. 285 we know that $\tau_{ucc}\subset \tau_{uc}$ so we only need to prove the reverse inclusion holds on $\Ffamily$. We fix $f_*\in C_0(X),\ \eps>0$ and define a $\tau_{uc}-$open set
$$U_{f_*,\eps} := \big\{f\in\Ffamily\ :\ \forall_{x\in X}\ |f(x) - f_*(x)| < \eps\big\}.$$

\noindent
Our task is as follows: for every $g\in U_{f_*,\eps}$ we need to define $\tau_{ucc}-$open set
$$V_{K, g, \delta} := \big\{f\in\Ffamily\ :\ \forall_{x\in K}\ |f(x)-g(x)| < \delta\big\}$$

\noindent
such that $V_{K, g, \delta}\subset U_{f_*,\eps}.$

We fix $g\in U_{f_*,\eps}$ and define 
\begin{gather}
d:= \|g-f_*\|_{\infty}.
\label{wedefined}
\end{gather} 

\noindent
We pick $\delta>0$ such that 
\begin{gather}
3\delta + d < \eps.
\label{choiceofdelta}
\end{gather}

\noindent
By (\ref{assumptionequivanishing}) there exists $K\Subset X$ such that 
\begin{gather}
\forall_{f\in\Ffamily}\ \sup_{x\in X\backslash K}\ |f(x)| < \delta.
\label{wechooseK}
\end{gather}

\noindent
Finally, if $f\in V_{K,g,\delta}$ then 
\begin{gather*}
\|f-f_*\|_{\infty} \leqslant \|f-g\|_{\infty} + \|g-f_*\|_{\infty} \stackrel{(\ref{wedefined})}{\leqslant} \sup_{x\in K}\ |f(x)-g(x)| + \sup_{x\in X\backslash K}\ |f(x)- g(x)|+ d \\
\leqslant \delta + \sup_{x\in X\backslash K}\ |f(x)| + \sup_{x\in X\backslash K}\ |g(x)| + d \stackrel{(\ref{wechooseK})}{\leqslant} 3\delta + d \stackrel{(\ref{choiceofdelta})}{<} \eps,
\end{gather*}

\noindent
which concludes the proof. 
\end{proof}

With Lemma \ref{equivanishingandtopologycollapse} at our disposal we are ready to characterize relatively compact families in $C_0(X).$

\begin{thm}(Arzel\`a-Ascoli theorem for $C_0(X)$)\\
Let $X$ be a locally compact space. A family $\Ffamily\subset C_0(X)$ is relatively compact with respect to $\|\cdot\|_{\infty}-$norm (equivalently $\tau_{uc}-$topology) if and only if 
\begin{description}
	\item[\hspace{0.4cm} (AA1)] $\Ffamily$ is pointwise bounded,
	\item[\hspace{0.4cm} (AA2)] $\Ffamily$ is equicontinuous at every point,
	\item[\hspace{0.4cm} (AA3)] $\Ffamily$ is equivanishing. 
\end{description}
\label{AAforC0X}
\end{thm}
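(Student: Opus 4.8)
The plan is to prove both implications by combining the classical Arzelà--Ascoli theorem (Theorem~\ref{classicalAA}) with the topology-collapse results, using the one-point compactification of $X$ as the bridge. Write $X^+ = X\cup\{\infty\}$ for the one-point compactification; recall that $C_0(X)$ embeds isometrically into $C(X^+)$ as the closed subspace of functions vanishing at $\infty$. A family $\Ffamily\subset C_0(X)$ is relatively compact in $(C_0(X),\|\cdot\|_\infty)$ if and only if it is relatively compact in $C(X^+)$ \emph{and} its closure still consists of functions vanishing at $\infty$; the latter is exactly what (AA3) will control.

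For the ``if'' direction, assume (AA1)--(AA3). First I would upgrade (AA3) to the statement that $\Ffamily$, viewed inside $C(X^+)$, is equicontinuous \emph{at the point $\infty$}: given $\eps>0$, pick $K\Subset X$ as in (\ref{assumptionequivanishing}) so that $|f(x)|<\eps$ for all $x\in X\setminus K$ and all $f\in\Ffamily$; then $U := X^+\setminus K$ is an open neighbourhood of $\infty$ in $X^+$ on which $|f(y)-f(\infty)| = |f(y)|<\eps$ for every $f\in\Ffamily$. Combined with (AA2) (equicontinuity at every point of $X$) this gives equicontinuity of $\Ffamily$ on all of $X^+$. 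Next, pointwise boundedness on $X^+$ follows from (AA1) together with the uniform bound $|f(\infty)|=0$. By Theorem~\ref{classicalAA} applied to the compact space $X^+$, $\Ffamily$ is relatively compact in $C(X^+)$. It remains to see that every limit function $f$ lies in $C_0(X)$, i.e.\ $f(\infty)=0$: but uniform convergence of a net/sequence from $\Ffamily$ forces $f(\infty)=\lim f_\alpha(\infty)=0$, so the closure of $\Ffamily$ in $C(X^+)$ actually sits inside $C_0(X)$, which is a closed (hence complete) subspace. Therefore $\Ffamily$ is relatively compact in $C_0(X)$.

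Alternatively, and perhaps more in the spirit of the ``collapsing topologies'' discussion preceding the theorem, I could argue: equip $C_0(X)$ with $\tau_{ucc}$; by the $C_0(X)$-version of the classical Arzelà--Ascoli theorem (which is Theorem~15 in Kelley together with Tychonoff, as sketched in the text), (AA1)+(AA2) already give relative compactness of $\Ffamily$ in $(C_0(X),\tau_{ucc})$; then Lemma~\ref{equivanishingandtopologycollapse} says that on $\Ffamily$ (equivalently, on its $\tau_{ucc}$-closure, which inherits equivanishing) the topologies $\tau_{ucc}$ and $\tau_{uc}$ coincide, so relative $\tau_{ucc}$-compactness is relative $\tau_{uc}$-compactness. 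I expect the delicate point here to be checking that the $\tau_{ucc}$-closure of $\Ffamily$ remains equivanishing --- this needs the observation that equivanishing is preserved under $\tau_{ucc}$-limits, because for a fixed compact $K$ from (\ref{assumptionequivanishing}) the condition $\sup_{x\in X\setminus K}|f(x)|\le\eps$ is a closed condition in $\tau_{ucc}$ once we note that on $X\setminus K$ one can exhaust by compacta; this requires a small argument and is the main obstacle, so the one-point-compactification route above is cleaner.

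For the ``only if'' direction, assume $\Ffamily$ is relatively compact in $(C_0(X),\|\cdot\|_\infty)$; let $\overline{\Ffamily}$ be its (compact) closure. (AA1) is immediate since each evaluation $f\mapsto f(x)$ is continuous and $\overline{\Ffamily}$ is compact, hence bounded at $x$. For (AA2) and (AA3) I would use a finite-$\eps$-net argument: given $\eps>0$, cover $\overline{\Ffamily}$ by finitely many balls $B(g_i,\eps/3)$, $i=1,\dots,n$, with $g_i\in C_0(X)$. For (AA3): each $g_i\in C_0(X)$ gives $K_i\Subset X$ with $|g_i|<\eps/3$ off $K_i$; put $K=\bigcup_i K_i$; then for any $f\in\Ffamily$, choosing $i$ with $\|f-g_i\|_\infty<\eps/3$ yields $|f(x)|\le|f(x)-g_i(x)|+|g_i(x)|<2\eps/3<\eps$ for $x\in X\setminus K$. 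For (AA2): fix $x_0\in X$; each $g_i$ is continuous at $x_0$, so there is an open $U_i\ni x_0$ with $|g_i(y)-g_i(x_0)|<\eps/3$ on $U_i$; set $U=\bigcap_i U_i$; then for $f\in\Ffamily$ and the appropriate $i$, the triangle inequality gives $|f(y)-f(x_0)|<\eps$ for $y\in U$. This closes the proof. The only genuinely substantive ingredient is the equivanishing direction of the ``if'' part (equivalently, the topology-collapse Lemma~\ref{equivanishingandtopologycollapse}), which is already in hand; everything else is the standard $\eps/3$-net bookkeeping.
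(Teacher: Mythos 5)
Your proof is correct, but your main route is genuinely different from the paper's. The paper proves the ``if'' direction by the collapse-of-topologies scheme it sets up beforehand: (\textbf{AA1})+(\textbf{AA2}) give relative $\tau_{ucc}$-compactness by citing Kelley/Munkres, and then Lemma \ref{equivanishingandtopologycollapse} (equivanishing forces $\tau_{ucc}|_{\Ffamily}=\tau_{uc}|_{\Ffamily}$) upgrades this to relative $\tau_{uc}$-compactness; for the ``only if'' direction it cites Kelley/Munkres for (\textbf{AA1}) and (\textbf{AA2}) and runs an $\eps$-net argument only for (\textbf{AA3}). You instead pass to the one-point compactification $X^+$, observe that equivanishing is precisely equicontinuity of the extended family at $\infty$, and invoke the classical Theorem \ref{classicalAA} on the compact space $X^+$; your ``only if'' direction establishes all three conditions by explicit $\eps/3$-net bookkeeping. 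Your route buys two things: it needs only the compact-space Arzel\`a--Ascoli theorem rather than its $\tau_{ucc}$ refinement, and it sidesteps the subtlety you yourself flag in your ``alternative'' paragraph --- namely that Lemma \ref{equivanishingandtopologycollapse} identifies the two topologies on the set $\Ffamily$ itself, whereas relative compactness is a statement about the closure of $\Ffamily$, so one must either check that the closure inherits equivanishing or argue via total boundedness; the paper's proof passes from ``the topologies coincide on $\Ffamily$'' to ``relatively $\tau_{uc}$-compact'' without addressing this. What the paper's route buys is continuity with the surrounding narrative about the three topologies $\tau_{pc}\subset\tau_{ucc}\subset\tau_{uc}$ and puts Lemma \ref{equivanishingandtopologycollapse} to the use for which it was proved. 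Both arguments are sound, yours under the (implicit, and surely intended) assumption that $X$ is Hausdorff, so that $X^+$ is compact Hausdorff and the compact set $K$ is closed in $X^+$.
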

\begin{proof}
For the ``\textit{if}'' part, we observe that if $\Ffamily\subset C_0(X)$ is both pointwise bounded and equicontinuous at every point then it is relatively $\tau_{ucc}-$compact (comp. Theorem 17 in \cite{Kelley}, p. 233 or Theorem 47.1 in \cite{Munkres}, p. 290). Moreover, since $\tau_{ucc}$ and $\tau_{uc}$ coincide on $\Ffamily$ (due to condition (\textbf{AA3}) and Lemma \ref{equivanishingandtopologycollapse}) then $\Ffamily$ is also relatively $\tau_{uc}-$compact.

As far as the ``\textit{only if}'' part is concerned, Theorem 17 in \cite{Kelley}, p. 233 (or Theorem 47.1 in \cite{Munkres}, p. 290) demonstrates that a relatively $\tau_{uc}-$compact family $\Ffamily\subset C_0(X)$ is both pointwise bounded and equicontinuous at every point. Consequently, it remains to prove that $\Ffamily$ is equivanishing. We fix $\eps>0$ and (due to relative $\tau_{uc}-$compactness) we pick an $\eps-$net $(f_n)_{n=1}^N,$ i.e. a finite sequence of functions in $\Ffamily$ such that 
\begin{gather}
\forall_{f\in\Ffamily}\ \exists_{n=1,\ldots,N}\ \|f-f_n\|_{\infty} < \eps.
\label{choiceofepsnet}
\end{gather}

\noindent
Next, we let $K\Subset G$ be such that 
\begin{gather}
\forall_{n=1,\ldots,N}\ \sup_{x\in X\backslash K}\ |f_n(x)| < \eps.
\label{choiceofKcompact}
\end{gather}

\noindent
Finally, we have
\begin{gather*}
\forall_{\substack{n=1,\ldots,N\\ f\in\Ffamily}}\ \sup_{x\in X\backslash K}\ |f(x)| \leqslant \sup_{x\in X\backslash K}\ |f(x) - f_n(x)| + \sup_{x\in X\backslash K}\ |f_n(x)| \stackrel{(\ref{choiceofKcompact})}{<} \|f-f_n\|_{\infty} + \eps.
\end{gather*}

\noindent
Due to (\ref{choiceofepsnet}) and the arbitrary choice of $\eps>0$ we conclude that $\Ffamily$ is equivanishing, which ends the proof.
\end{proof}

Before we conclude the paper with Pego theorem for $L^1(G)$ let us introduce one final improvement to Theorem \ref{AAforC0X}. The idea for this innovation dates back to the work of Vladimir N. Sudakov $-$ a modern exposition of his techniques can be found in \cite{GorkaRafeiro} or \cite{HancheOlsenHoldenMalinnikova}. 

We need the following technical lemma:

\begin{lemma}
Let $X$ be a locally compact (not necessarily abelian) group and let $K\Subset X.$ If a family $\Ffamily \subset C_0(X)$ is equicontinuous at every point, then for every $\eps>0$ there exists an open neighbourhood $V_e$ of the neutral element $e\in X$ such that 
$$\forall_{\substack{y\in V_e\\ f\in\Ffamily}}\ \sup_{x\in K}\ |f(yx) - f(x)| < \eps.$$
\label{technicallemmasudakov}
\end{lemma}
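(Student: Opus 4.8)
The plan is to upgrade pointwise equicontinuity to a ``uniform on $K$'' statement via a standard compactness argument. First I would fix $\eps>0$. By equicontinuity at every point, for each $x\in K$ there is an open neighbourhood $W_x$ of $x$ such that $\sup_{f\in\Ffamily}\,\sup_{z\in W_x}|f(z)-f(x)| < \tfrac{\eps}{2}$ (it is worth noting the phrasing here: equicontinuity at $x$ already gives a single neighbourhood that works simultaneously for all $f\in\Ffamily$, so this is genuinely a statement about the point $x$, not about individual functions). Next, using continuity of the group multiplication at $(e,x)$, I would shrink: pick an open neighbourhood $V_x$ of the neutral element $e$ and an open neighbourhood $W_x'\subseteq W_x$ of $x$ such that $V_x\cdot W_x'\subseteq W_x$. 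The sets $\{W_x'\}_{x\in K}$ form an open cover of the compact set $K$, so finitely many $W_{x_1}',\ldots,W_{x_m}'$ cover $K$; set $V_e := \bigcap_{j=1}^m V_{x_j}$, a finite intersection of open neighbourhoods of $e$, hence itself an open neighbourhood of $e$.

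Then I would verify the conclusion. Take $y\in V_e$, $f\in\Ffamily$, and $x\in K$. There is an index $j$ with $x\in W_{x_j}'$, so $x\in W_{x_j}$ and therefore $|f(x)-f(x_j)| < \tfrac{\eps}{2}$. Also $yx\in V_e\cdot W_{x_j}' \subseteq V_{x_j}\cdot W_{x_j}'\subseteq W_{x_j}$, so $|f(yx)-f(x_j)| < \tfrac{\eps}{2}$. By the triangle inequality $|f(yx)-f(x)|\leqslant |f(yx)-f(x_j)| + |f(x_j)-f(x)| < \eps$, and since this bound is uniform over $x\in K$ and $f\in\Ffamily$ (the index $j$ depends on $x$ but the estimate $\eps$ does not), we get $\sup_{x\in K}|f(yx)-f(x)| < \eps$ for all $y\in V_e$, $f\in\Ffamily$, as required. (A harmless cosmetic point: to be safe one can take $<\eps$ in the displayed statement by starting from $\tfrac{\eps}{3}$-neighbourhoods instead, but $\tfrac{\eps}{2}+\tfrac{\eps}{2}=\eps$ already suffices if one is willing to have $\leqslant$ collapse to $<$ via the strict inequalities above.)

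The only mild subtlety — and the step I would be most careful about — is the interaction between the additive-style neighbourhood $W_x$ coming from equicontinuity and the multiplicative translation $y\mapsto yx$: equicontinuity as usually stated controls $|f(z)-f(x)|$ for $z$ near $x$, and one must convert ``$y$ near $e$'' into ``$yx$ near $x$'' using joint continuity of multiplication (or, equivalently, continuity of the translation map at $e$), which is exactly what the auxiliary neighbourhoods $V_x, W_x'$ with $V_x W_x'\subseteq W_x$ accomplish. Everything else is the routine finite-subcover packaging. No use is made of local compactness beyond having $K$ compact, nor of abelianness, consistent with the hypotheses as stated.
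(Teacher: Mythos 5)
Your proof is correct and follows essentially the same route as the paper: equicontinuity at each point of $K$, a shrinking of neighbourhoods so that a neighbourhood of $e$ carries $yx$ back into the original equicontinuity neighbourhood (your $V_x\cdot W_x'\subseteq W_x$ is the paper's $V_x^2\subseteq U_x$ with $W_x'=V_xx$), a finite subcover of $K$, and the intersection $V_e=\bigcap V_{x_j}$ followed by the triangle inequality. The only differences are cosmetic (your $\eps/2$ budget versus the paper's final $2\eps$).
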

\begin{proof}
Fix $\eps>0.$ For every $x\in K$ we choose an open neighbourhood $U_x$ of the neutral element (note that $U_x$ need not be a neighbourhood of $x$) such that 
\begin{gather}
\forall_{\substack{y\in U_xx\\ f\in\Ffamily}}\ |f(y)- f(x)| < \eps,
\label{equicontSudakov}
\end{gather}

\noindent
according to the definition of equicontinuity in Theorem \ref{classicalAA}. Furthermore, for every $x\in K$ we let $V_x$ be an open neighbourhood of the identity element such that $V_x^2\subset U_x.$ The family $(V_xx)_{x\in K}$ is an open cover of $K$ so (by the compactness of $K$) we choose a finite subcover $(V_{x_n}x_n)_{n=1}^N.$ Let $V_e$ be defined by the formula
\begin{gather}
V_e := \bigcap_{n=1}^N\ V_{x_n}.
\label{definitionofVeSudakov}
\end{gather}

For every $x\in K$ let $ind(x)\in \{1,\ldots,N\}$ denote an index such that $x\in V_{x_{ind(x)}}x_{ind(x)}.$ Consequently, the fact that 
$$\forall_{x\in K}\ x\in V_{x_{ind(x)}}x_{ind(x)} \subset U_{x_{ind(x)}}x_{ind(x)}$$

\noindent
coupled with (\ref{equicontSudakov}) implies that
\begin{gather}
\forall_{\substack{x\in K\\ f\in\Ffamily}}\ |f(x) - f(x_{ind(x)})| < \eps.
\label{ineq1Sudakov}
\end{gather} 

\noindent
Furthermore, since
$$\forall_{\substack{y\in V_e\\ x\in K}}\ yx \in V_eV_{x_{ind(x)}}x_{ind(x)} \stackrel{(\ref{definitionofVeSudakov})}{\subset} V_{x_{ind(x)}}^2x_{ind(x)}\subset U_{x_{ind(x)}}x_{ind(x)},$$

\noindent
then (again using (\ref{equicontSudakov})) we have
\begin{gather}
\forall_{\substack{y\in V_e\\ x\in K\\ f\in\Ffamily}}\ |f(yx) - f(x_{ind(x)})| < \eps.
\label{ineq2Sudakov}
\end{gather}

\noindent
Finally, we have
\begin{gather*}
\forall_{\substack{y\in V_e\\ f\in\Ffamily}}\ \sup_{x\in K}\ |f(yx) - f(x)| \leqslant \sup_{x\in K}\ |f(yx) - f(x_{ind(x)})| + \sup_{x\in K}\ |f(x_{ind(x)}) - f(x)| \stackrel{(\ref{ineq1Sudakov}),\ (\ref{ineq2Sudakov})}{\leqslant} 2\eps,
\end{gather*}

\noindent
which concludes the proof.
\end{proof}

Our next theorem demonstrates that under certain circumstances, the assumption (\textbf{AA1}) in Theorem \ref{AAforC0X} becomes redundant.

\begin{thm}
Let $X$ be a locally compact (not necessarily abelian) group such that for any open neighbourhood $U_e$ of the neutral element $e\in X$ there exists an element $x_*\in U_e$ such that the sequence $(x_*^n)_{n\in\naturals}$ is not contained in any compact set. If a family $\Ffamily\subset C_0(X)$ is equicontinuous at every point and equivanishing then it is pointwise bounded, i.e. (\textbf{AA2}) and (\textbf{AA3}) together imply (\textbf{AA1}). 
\label{AASudakov}
\end{thm}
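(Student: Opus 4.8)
The plan is to argue by contradiction: suppose $\Ffamily$ is equicontinuous at every point and equivanishing, but fails to be pointwise bounded. Then there is a point $x_0\in X$ and a sequence $(f_k)_{k\in\naturals}\subset\Ffamily$ with $|f_k(x_0)|\to\infty$. First I would use equicontinuity at $x_0$ (in the form given in Theorem~\ref{classicalAA}) to produce an open neighbourhood $U_e$ of the neutral element $e$ such that $|f(y x_0)-f(x_0)|<1$ for all $y\in U_e$ and all $f\in\Ffamily$; hence $|f_k(y x_0)|\geqslant |f_k(x_0)|-1$ for every $y\in U_e$, and the lower bound is uniform in $y$.

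Next I would invoke the structural hypothesis on $X$: pick $x_*\in U_e$ such that the sequence $(x_*^n)_{n\in\naturals}$ is not contained in any compact set. The idea is that the points $x_*^n x_0$ then ``escape to infinity'' along $X$, while simultaneously the values $|f_k(x_*^n x_0)|$ stay large. To make this precise I would like to iterate the one-step estimate so as to compare $f(x_*^n x_0)$ with $f(x_0)$; the natural tool here is Lemma~\ref{technicallemmasudakov}. Applied with a compact set $K$ chosen to contain a suitable finite orbit segment, and with $y=x_*$, it gives $\sup_{x\in K}|f(x_* x)-f(x)|<\eps$ uniformly over $\Ffamily$, which telescopes: for $x_*^0 x_0,\dots,x_*^{n-1}x_0$ all lying in $K$ we get $|f(x_*^n x_0)-f(x_0)|<n\eps$. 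The cleaner route, which I expect to be the one the authors take, is to avoid uncontrolled telescoping and instead argue directly: fix any compact $K\Subset X$; since $(x_*^n)$ is not contained in $K x_0^{-1}$, there is $n$ with $x_*^n x_0\notin K$; then by equivanishing (condition on $\Ffamily$) applied with $\eps:=1$ we would get $|f(x_*^n x_0)|<1$ for all $f\in\Ffamily$ once $K$ is the equivanishing compact set — but we must still connect $|f_k(x_*^n x_0)|$ back to $|f_k(x_0)|$.

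So the argument I would actually write is: let $\eps=1$ and let $K_1\Subset X$ be the equivanishing compact set, so $\sup_{x\notin K_1}|f(x)|<1$ for all $f\in\Ffamily$. Let $U_e$ be the equicontinuity neighbourhood from the first paragraph (for the point $x_0$, with tolerance $1$), choose $x_*\in U_e$ with $(x_*^n)$ not relatively compact, and let $K:=K_1\cup\{x_0\}\cup\{x_*^n x_0: n=0,1,\dots\}$'s closure is not compact, so instead take $K:=K_1 x_0^{-1}\cup U_e$-closure... the key point is simply that $\{x_*^n x_0 : n\in\naturals\}$ is not contained in $K_1$, hence some $x_*^{n_0} x_0\notin K_1$, giving $|f_k(x_*^{n_0} x_0)|<1$ for every $k$. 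On the other hand, apply Lemma~\ref{technicallemmasudakov} with the compact set $K':=\{x_0, x_* x_0,\dots,x_*^{n_0-1}x_0\}$ and a tolerance $\eta$ with $n_0\eta<$ (anything finite): since $x_*\in U_e\subset V_e$ after shrinking, the lemma yields $|f(x_* x)-f(x)|<\eta$ for $x\in K'$, and telescoping along the orbit gives $|f_k(x_*^{n_0}x_0)-f_k(x_0)|<n_0\eta$. Combining, $|f_k(x_0)|<1+n_0\eta$ for all $k$, contradicting $|f_k(x_0)|\to\infty$ once $n_0$ and $\eta$ are fixed (note $n_0$ depends only on $K_1$ and $x_*$, not on $k$). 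Hence $\Ffamily$ is pointwise bounded.

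\emph{Main obstacle.} The delicate point is the order of quantifiers: $n_0$ must be chosen \emph{before} the sequence $(f_k)$ is exploited, i.e. it must depend only on the fixed data $K_1$ and $x_*$, not on $k$; and the neighbourhood $U_e$ (hence the admissible $x_*$) must be obtained from equicontinuity at the single point $x_0$ with a tolerance that does not shrink. One must also make sure Lemma~\ref{technicallemmasudakov} is applicable — it requires a fixed compact $K'$, which here is the finite orbit segment, and a single neighbourhood $V_e$ containing $x_*$; shrinking $U_e$ if necessary so that $x_*\in V_e$ is harmless since $x_*$ can be re-selected inside the smaller neighbourhood (the hypothesis on $X$ guarantees a non-relatively-compact-orbit element in \emph{every} neighbourhood of $e$). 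Handling these dependencies carefully is the crux; the rest is the telescoping estimate and the triangle inequality.
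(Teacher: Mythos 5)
Your overall strategy is the same as the paper's: use the hypothesis on $X$ to produce an element $x_*$ near $e$ whose powers push a point out of the equivanishing compact set, telescope the uniform equicontinuity estimate of Lemma~\ref{technicallemmasudakov} along the orbit, and close the estimate with equivanishing. The one genuine problem is the circular dependency you yourself flag at the end: you feed Lemma~\ref{technicallemmasudakov} the compact set $K'=\{x_0,x_*x_0,\dots,x_*^{n_0-1}x_0\}$, which is defined in terms of $x_*$ and $n_0$, and then require $x_*$ to lie in the neighbourhood $V_e$ that the lemma returns for that very set. Your proposed remedy --- re-selecting $x_*$ inside the smaller neighbourhood --- does not break the loop: a new $x_*$ gives a new orbit segment $K'$, hence a new $V_e$, and there is no reason the process stabilises. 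As written, the step ``$|f(x_*^{j+1}x_0)-f(x_*^jx_0)|<\eta$ for $j=0,\dots,n_0-1$'' is therefore not justified.

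The fix is to reorder the choices so that the compact set handed to Lemma~\ref{technicallemmasudakov} is fixed \emph{before} $x_*$ is chosen: take $K:=K_1\cup\{x_0\}$, where $K_1$ is the equivanishing compact set for $\eps=1$, obtain $V_e$ from the lemma for this $K$, and only then pick $x_*\in V_e$ with escaping orbit. If $n_0$ is the first index with $x_*^{n_0}x_0\notin K_1$, then $x_*^jx_0\in K$ for $j=0,\dots,n_0-1$ by minimality of $n_0$, so the telescoping is legitimate and yields $|f(x_0)|<n_0\eta+1$ uniformly over $\Ffamily$. This is essentially the paper's arrangement: there the lemma is applied to the equivanishing compact set, the one-step estimate $\sup_{x\in K}|f(x)|<1+\sup_{x\in x_*K}|f(x)|$ is iterated until $x_*^NK\cap K=\emptyset$, and the conclusion is the stronger uniform bound $\|f\|_{\infty}\leqslant N+2$ rather than a pointwise bound obtained by contradiction. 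Apart from this ordering issue your argument is correct.
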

\begin{proof}
Let $\Ffamily\subset C_0(X)$ be a family, which is both equicontinuous at every point and equivanishing. By Lemma \ref{technicallemmasudakov} there exists an open neighbourhood $V_e$ of the neutral element such that 
\begin{gather}
\forall_{\substack{y\in V_e\\ f\in\Ffamily}}\ \sup_{x\in K}\ |f(yx) - f(x)| < 1.
\label{choiceofUe}
\end{gather}

\noindent
Furthermore there exists $K\Subset X$ such that 
\begin{gather}
\sup_{\substack{x\in X\backslash K\\ f\in\Ffamily}}\ |f(x)| < 1.
\label{equivanish1}
\end{gather}

For the set $V_e$ we pick $x_*\in V_e$ such that the sequence $(x_*^n)_{n\in\naturals}$ is not contained in any compact set. Since we have
\begin{gather*}
\forall_{f\in\Ffamily}\ \sup_{x\in K}\ |f(x)| \leqslant \sup_{x\in K}\ |f(x_*x) - f(x)| + \sup_{x\in K}\ |f(x_*x)| \stackrel{(\ref{choiceofUe})}{<} 1 + \sup_{x\in K}\ |f(x_*x)| = 1 + \sup_{x\in x_*K}\ |f(x)|,
\end{gather*}

\noindent
then by an inductive reasoning we obtain that
\begin{gather}
\forall_{\substack{f\in\Ffamily\\ n\in\naturals}}\ \sup_{x\in K}\ |f(x)| \leqslant n + \sup_{x\in x_*^nK}\ |f(x)|.
\label{byinductivereasoning}
\end{gather}

\noindent
It remains to note that there exists $N\in\naturals$ such that $x_*^NK\cap K = \emptyset$ since otherwise this would lead to $(x_n)\subset KK^{-1},$ which contradicts our assumption. Finally, we have
$$\forall_{f\in\Ffamily}\ \|f\|_{\infty} \leqslant \sup_{x\in K}\ |f(x)| + \sup_{x\in X\backslash K}\ |f(x)|\stackrel{(\ref{byinductivereasoning}), (\ref{equivanish1})}{<} N + \sup_{x\in x_*^NK}\ |f(x)| + 1 \leqslant N+2,$$

\noindent
which concludes the proof. 
\end{proof}

\begin{cor}
A family $\Ffamily\subset C_0(\reals^N)$ is relatively compact (with respect to $\|\cdot\|_{\infty}-$norm) if and only if 
\begin{itemize}
	\item $\Ffamily$ is equicontinuous at every point,
	\item $\Ffamily$ is equivanishing. 
\end{itemize}
\end{cor}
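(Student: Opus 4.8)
The plan is to derive this corollary directly from Theorem \ref{AAforC0X} together with Theorem \ref{AASudakov}, specialized to the group $X = \reals^N$. The structure is: recognize that $\reals^N$ (with vector addition) is a locally compact abelian group, verify that it satisfies the hypothesis of Theorem \ref{AASudakov}, and then conclude that the pointwise boundedness condition (\textbf{AA1}) is automatically implied by equicontinuity and equivanishing, so the characterization in Theorem \ref{AAforC0X} collapses to just the two listed conditions.

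First I would check the hypothesis of Theorem \ref{AASudakov} for $X=\reals^N$: given any open neighbourhood $U_0$ of the zero vector, we must exhibit a point $x_*\in U_0$ whose ``orbit'' $(x_*^n)_{n\in\naturals} = (n x_*)_{n\in\naturals}$ (the group operation being addition) is not contained in any compact set. This is immediate: pick any nonzero $x_*\in U_0$ (which exists since $U_0$ is a nonempty open set, hence contains a ball around $0$); then $\|n x_*\|_2 = n\|x_*\|_2 \to \infty$, so the sequence $(nx_*)_{n\in\naturals}$ is unbounded and therefore not contained in any compact subset of $\reals^N$. Thus $\reals^N$ meets the hypothesis of Theorem \ref{AASudakov}.

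Next, for the ``only if'' direction: if $\Ffamily\subset C_0(\reals^N)$ is relatively compact, then by Theorem \ref{AAforC0X} it is pointwise bounded, equicontinuous at every point, and equivanishing; in particular the last two conditions hold, which is what we need. For the ``if'' direction: if $\Ffamily$ is equicontinuous at every point and equivanishing, then by Theorem \ref{AASudakov} (whose hypothesis on $X=\reals^N$ we have just verified) $\Ffamily$ is also pointwise bounded, so all three conditions (\textbf{AA1}), (\textbf{AA2}), (\textbf{AA3}) of Theorem \ref{AAforC0X} are satisfied, whence $\Ffamily$ is relatively compact with respect to $\|\cdot\|_\infty$.

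There is essentially no obstacle here — the corollary is a routine packaging of the two preceding theorems. The only point requiring a moment's thought is confirming the orbit-escapes-compacta hypothesis, and for $\reals^N$ that is trivial because scalar multiples of a fixed nonzero vector form an unbounded set. I would keep the proof to a few lines: state that $\reals^N$ is a locally compact abelian group, observe that any nonzero $x_*$ in a given neighbourhood of $0$ has $(nx_*)_{n\in\naturals}$ unbounded hence noncompact, and then invoke Theorems \ref{AAforC0X} and \ref{AASudakov} to conclude.
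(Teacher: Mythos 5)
Your proof is correct and is exactly the argument the paper intends (the corollary is stated without proof as an immediate consequence of Theorems \ref{AAforC0X} and \ref{AASudakov}): one verifies that $\reals^N$ satisfies the orbit-escapes-compacta hypothesis by taking any nonzero $x_*$ in the given neighbourhood of $0$, so that $(nx_*)_{n\in\naturals}$ is unbounded, and then combines the two theorems. Nothing further is needed.
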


We have finally reached the climax of the paper $-$ the characterization of relatively compact families in $L^1(G)$ via the Fourier transform. The theorem below serves as $L^1(G)-$counterpart of the classic Pego theorem demonstrated in \cite{Pego} and generalized in \cite{Gorka} and \cite{GorkaKostrzewa}. 

\begin{thm}(Pego theorem for $L^1(G)$)\\
A family $\Ffamily\subset L^1(G)$ is relatively compact if and only if 
\begin{description}
	\item[\hspace{0.4cm} (P1)] $\widehat{\Ffamily}$ is bounded in $C_0(\widehat{G}),$
	\item[\hspace{0.4cm} (P2)] $\widehat{\Ffamily}$ is equicontinuous at every point (character), i.e. for every $\chi\in\widehat{G}$ and $\eps>0$ there exists an open neighbourhood $U_{\chi}\in\widehat{G}$ of $\chi$ such that
	$$\forall_{\substack{\eta\in U_{\chi}\\ f\in\Ffamily}}\ |\widehat{f}(\eta) - \widehat{f}(\chi)| < \eps,$$
	\item[\hspace{0.4cm} (P3)] $\widehat{\Ffamily}$ is equivanishing, i.e. for every $\eps>0$ there exists $K\Subset \widehat{G}$ such that 
	$$\sup_{\chi\in\widehat{G}\backslash K}\ |\widehat{f}(\chi)| < \eps.$$
\end{description}

Furthermore, if $\widehat{G}$ is such that for any open neighbourhood $U_{\mathds{1}}$of the identity $\mathds{1}$ there exists an element $\chi_*\in U_{\mathds{1}}$ such that the sequence $(\chi_*^n)_{n\in\naturals}$ is not contained in any compact set, then the condition (\textbf{P1}) is redundant.
\label{Pegotheorem}
\end{thm}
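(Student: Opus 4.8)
The plan is to carry out the strategy announced right after Theorem~\ref{C0widehatGCstarG}: transport the question from $L^1(G)$ to $C_0(\widehat G)$ along the chain
\[
L^1(G)\xrightarrow{\Psi}\Psi(L^1(G))\subset C^*(G)\xrightarrow{\Omega^{-1}}C_0(\widehat G),
\]
and then read off the answer from the Arzel\`a--Ascoli theorem for $C_0$-spaces established above. The first point to pin down is that the composite of the two arrows is nothing but the Fourier transform. Unwinding the construction of $\Omega$ — using $\Omega^{-1}=\Theta^{-1}\circ\Gamma$ from (\ref{definitionofOmega}), the formula (\ref{inverseTheta}) for $\Theta^{-1}$, the formula (\ref{inversePsi}) for $\Psi_\Delta^{-1}$, and (\ref{definitionM}), and writing $m_\chi:=M(\chi)$ — one computes, for $f\in L^1(G)$ and $\chi\in\widehat G$,
\[
\Omega^{-1}(\Psi(f))(\chi)=\Gamma(\Psi(f))\big(\Psi_\Delta^{-1}(m_\chi)\big)=\Psi_\Delta^{-1}(m_\chi)\big(\Psi(f)\big)=\big(m_\chi\circ\Psi^{-1}\big)\big(\Psi(f)\big)=m_\chi(f)=\widehat f(\chi).
\]
Hence $\Omega^{-1}\circ\Psi(\Ffamily)=\widehat{\Ffamily}\subset C_0(\widehat G)$, which is exactly why the three conditions of the theorem are phrased in terms of $\widehat f$.

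Next I would transfer relative compactness along this chain. Since $\Omega$ is an isometric $*$-isomorphism (Theorem~\ref{C0widehatGCstarG}), so is $\Omega^{-1}$; combining this with the fact, noted at the start of the proof of Lemma~\ref{normofPsiandfinL1}, that $\Psi$ is a homeomorphism of $L^1(G)$ onto its image $\Psi(L^1(G))$, one obtains that $\Omega^{-1}\circ\Psi$ is a homeomorphism of $L^1(G)$ onto $\widehat{L^1(G)}\subset C_0(\widehat G)$, and therefore that $\Ffamily$ is relatively compact in $L^1(G)$ if and only if $\widehat{\Ffamily}$ is relatively compact in $(C_0(\widehat G),\|\cdot\|_{\infty})$. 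I expect this to be the main obstacle of the proof. The ``only if'' half is unproblematic — a continuous map sends compacta to compacta and $C_0(\widehat G)$ is Hausdorff — but the ``if'' half is the delicate one: it is not enough that $\Omega^{-1}\circ\Psi$ be merely a bounded injection (Corollary~\ref{Psinotisometry} already warns us that it is far from an isometry), one needs the continuity of its inverse on $\widehat{L^1(G)}$ and, on top of that, one must be sure that relative compactness of $\widehat{\Ffamily}$ inside $C_0(\widehat G)$ can be pulled back inside the subspace $\widehat{L^1(G)}$. This is precisely what the homeomorphism claim in the proof of Lemma~\ref{normofPsiandfinL1} is meant to secure, and it is the hinge on which the whole theorem turns.

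With the displayed equivalence in hand, the third step is a direct appeal to the Arzel\`a--Ascoli theorem for $C_0(X)$, Theorem~\ref{AAforC0X}, taken with $X=\widehat G$ (a locally compact space): $\widehat{\Ffamily}$ is relatively compact in $(C_0(\widehat G),\|\cdot\|_{\infty})$ precisely when it is (\textbf{AA1}) pointwise bounded, (\textbf{AA2}) equicontinuous at every character and (\textbf{AA3}) equivanishing. Here (\textbf{AA2}) and (\textbf{AA3}) are word for word (\textbf{P2}) and (\textbf{P3}); as for (\textbf{AA1}) versus (\textbf{P1}), a relatively compact subset of a normed space is norm-bounded, so the ``only if'' half of the theorem in fact delivers the uniform bound (\textbf{P1}), while for the ``if'' half the uniform bound (\textbf{P1}) trivially implies the pointwise bound (\textbf{AA1}). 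Stringing these equivalences together yields that $\Ffamily$ is relatively compact in $L^1(G)$ if and only if (\textbf{P1}), (\textbf{P2}) and (\textbf{P3}) all hold.

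For the closing ``furthermore'' clause I would invoke Theorem~\ref{AASudakov} with the locally compact abelian group $\widehat G$ in the role of $X$ and with the family $\widehat{\Ffamily}\subset C_0(\widehat G)$: under the stated hypothesis on neighbourhoods of the identity $\mathds{1}\in\widehat G$, conditions (\textbf{AA2}) and (\textbf{AA3}) already force the pointwise bound (\textbf{AA1}); hence, by the steps above, $\widehat{\Ffamily}$ is relatively compact in $(C_0(\widehat G),\|\cdot\|_{\infty})$ and therefore norm-bounded, so (\textbf{P1}) holds automatically and may be struck from the list. The residual verifications — that $\widehat G$ is indeed a locally compact abelian group, so that both Theorem~\ref{AAforC0X} and Theorem~\ref{AASudakov} apply — are immediate and require no further comment.
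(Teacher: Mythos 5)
Your proposal reproduces the paper's own proof essentially step for step: the computation $\Omega^{-1}\circ\Psi(f)=\widehat{f}$, the appeal to Theorem \ref{AAforC0X} to translate relative compactness in $C_0(\widehat{G})$ into (\textbf{P1})--(\textbf{P3}), and the use of Theorem \ref{AASudakov} for the final clause are exactly what the paper does. You have also put your finger on precisely the right spot: everything hinges on whether relative compactness of $\widehat{\Ffamily}$ in $(C_0(\widehat{G}),\|\cdot\|_{\infty})$ can be pulled back to relative compactness of $\Ffamily$ in $L^1(G)$, i.e.\ on whether $\Omega^{-1}\circ\Psi$ is a homeomorphism onto its image.

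Unfortunately the claim you lean on at that point --- that $\Psi$ is a homeomorphism onto $\Psi(L^1(G))$ ``by the open mapping theorem,'' asserted at the start of the proof of Lemma \ref{normofPsiandfinL1} --- is not valid, and the ``if'' direction fails with it. The open mapping theorem produces a continuous inverse only for a continuous linear \emph{surjection} onto a Banach space; here $\Psi(L^1(G))$ is in general a proper dense, hence non-closed, hence non-complete subspace of $C^*(G)$, so the theorem does not apply, and indeed $\Psi^{-1}$ cannot be continuous (a homeomorphic image of the complete space $L^1(G)$ would be complete, hence closed, hence equal to $C^*(G)$). Concretely, for $G=\integers$ the image $\Omega^{-1}\circ\Psi(L^1(\integers))$ is the Wiener algebra of absolutely convergent Fourier series, a proper dense subalgebra of $C(\widehat{\integers})$; since the norms $\|f\|_1$ and $\|\widehat{f}\|_{\infty}$ are therefore inequivalent on it, there exist $f_n\in L^1(\integers)$ with $\|f_n\|_1=1$ and $\|\widehat{f_n}\|_{\infty}\to 0$. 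The family $\Ffamily=\{f_n\}$ satisfies (\textbf{P1})--(\textbf{P3}) (its transform converges uniformly to $0$, and $\widehat{\integers}$ is compact so equivanishing is vacuous), yet it has no $L^1$-convergent subsequence, because any $L^1$-limit $f$ would satisfy $\widehat{f}=0$ and hence $f=0$, contradicting $\|f_n\|_1=1$. So the sufficiency half of the theorem cannot be obtained by the route you describe (nor, for the same reason, by the paper's); only the necessity of (\textbf{P1})--(\textbf{P3}), which uses merely the continuity of $f\mapsto\widehat{f}$, survives this argument. Any correct sufficiency criterion must encode additional information that controls $\|f\|_1$ by data about $\widehat{f}$, which uniform data on $\widehat{G}$ alone cannot do.
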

\begin{proof}
Let $f\in\Ffamily$. Then 
\begin{gather*}
\Omega^{-1}\circ\Psi(f) \stackrel{(\ref{definitionofOmega})}{=} (\Gamma^{-1}\circ\Theta)^{-1}(\Psi_f) = \Theta^{-1}\circ\Gamma(\Psi_f) = \Theta^{-1}(\widehat{\Psi_f}) \stackrel{(\ref{inverseTheta})}{=} \widehat{\Psi_f}\circ \Psi_{\Delta}^{-1}\circ M.
\end{gather*}

\noindent
Furthermore, we have
\begin{gather*}
\forall_{\chi\in\widehat{G}}\ \widehat{\Psi_f}\circ\Psi_{\Delta}^{-1}\circ M(\chi) \stackrel{(\ref{definitionM})}{=} \widehat{\Psi_f}\circ\Psi_{\Delta}^{-1}(m_{\chi}) \stackrel{(\ref{inversePsi})}{=} \widehat{\Psi_f}(m_{\chi}\circ\Psi^{-1})\\
= m_{\chi}\circ\Psi^{-1}(\Psi_f) = m_{\chi}(f) = \widehat{f}(\chi),
\end{gather*}

\noindent
which leads to the conclusion that $\Omega^{-1}\circ\Psi(\Ffamily) = \widehat{\Ffamily}.$ By Theorem \ref{AAforC0X} the family $\Omega^{-1}\circ \Psi(\Ffamily)$ (equivalently $\Ffamily$) is relatively compact in $C_0(\widehat{G})$ (respectively in $L^1(G)$) if and only if \textbf{(P1)}, \textbf{(P2)} and \textbf{(P3)} are satisfied. This demonstrates the first part of the theorem. The second part easily follows from Theorem \ref{AASudakov}.
\end{proof}

\begin{cor}
A family $\Ffamily\subset L^1(\reals^N)$ is relatively compact if and only if 
\begin{itemize}
	\item $\widehat{\Ffamily}$ is equicontinuous,
	\item $\widehat{\Ffamily}$ is equivanishing.
\end{itemize}
\end{cor}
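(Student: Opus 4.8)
The plan is to deduce the $\reals^N$ corollary as a direct specialization of the general Pego theorem for $L^1(G)$ (Theorem \ref{Pegotheorem}), so essentially no new analytic work is needed; the task reduces to checking that $G = \reals^N$ satisfies the hypothesis that makes condition \textbf{(P1)} redundant, and to recognizing that \textbf{(P2)} is exactly the stated equicontinuity and \textbf{(P3)} is exactly the stated equivanishing. First I would recall that the dual group of $\reals^N$ is (topologically isomorphic to) $\reals^N$ itself, via the pairing $x \mapsto e^{2\pi i \langle x, \cdot\rangle}$, so that $\widehat{\Ffamily} \subset C_0(\widehat{\reals^N}) \cong C_0(\reals^N)$ and the conditions \textbf{(P2)}, \textbf{(P3)} translate verbatim into equicontinuity and equivanishing of $\widehat{\Ffamily}$ as a family in $C_0(\reals^N)$.

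The one point requiring a short argument is the redundancy of \textbf{(P1)}: I need to verify that $\widehat{\reals^N} \cong \reals^N$ has the property that every open neighbourhood $U_{\mathds{1}}$ of the identity contains an element $\chi_*$ whose orbit $(\chi_*^n)_{n\in\naturals}$ lies in no compact set. Written additively in $\reals^N$, this says: every neighbourhood of $0$ contains a nonzero vector $v$ such that $(n v)_{n\in\naturals}$ is unbounded — which is obvious, since any $v \neq 0$ works and every neighbourhood of $0$ contains such a $v$. Hence Theorem \ref{AASudakov} applies, \textbf{(P1)} follows from \textbf{(P2)} and \textbf{(P3)}, and the characterization collapses to the two stated conditions.

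I would therefore organize the proof as: (i) invoke Theorem \ref{Pegotheorem} with $G = \reals^N$; (ii) identify $\widehat{\reals^N}$ with $\reals^N$ and note \textbf{(P2)}/\textbf{(P3)} are the displayed equicontinuity/equivanishing; (iii) check the orbit-noncompactness hypothesis for $\reals^N$ as above to discard \textbf{(P1)}. The only conceivable obstacle is a purely bookkeeping one — making sure the identification $\widehat{\reals^N} \cong \reals^N$ is invoked with a reference (e.g. the standard fact in Folland or Deitmar–Echterhoff) and that the topology on $\widehat{\reals^N}$ transported to $\reals^N$ is the usual one, so that "compact in $\widehat{\reals^N}$" means "bounded closed in $\reals^N$" and "open neighbourhood of $\mathds{1}$" means "open neighbourhood of $0$". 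Once that is pinned down, the corollary is immediate and no estimate or limiting argument is involved.

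\begin{proof}
The dual group $\widehat{\reals^N}$ is topologically isomorphic to $\reals^N$ (comp. \cite{Folland}, p. 89 or \cite{DeitmarEchterhoff}, p. 63), the isomorphism sending $\xi\in\reals^N$ to the character $x\mapsto e^{2\pi i\langle x,\xi\rangle}$. Under this identification, conditions \textbf{(P2)} and \textbf{(P3)} of Theorem \ref{Pegotheorem} become precisely the equicontinuity and equivanishing of $\widehat{\Ffamily}$ as a subfamily of $C_0(\reals^N)$. Moreover, every open neighbourhood $U$ of $0\in\reals^N$ contains some nonzero vector $v$, and the sequence $(nv)_{n\in\naturals}$ is unbounded, hence contained in no compact subset of $\reals^N$. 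Thus $\widehat{\reals^N}$ satisfies the hypothesis in the second part of Theorem \ref{Pegotheorem}, so condition \textbf{(P1)} is redundant. Applying Theorem \ref{Pegotheorem} yields the claim.
\end{proof}
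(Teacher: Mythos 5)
Your proposal is correct and matches the paper's (implicit) argument exactly: the corollary is stated as an immediate specialization of Theorem \ref{Pegotheorem} to $G=\reals^N$, using the standard identification $\widehat{\reals^N}\cong\reals^N$ and the observation that any nonzero $v$ in a neighbourhood of $0$ has $(nv)_{n\in\naturals}$ escaping every compact set, so that \textbf{(P1)} is redundant by Theorem \ref{AASudakov}. Nothing is missing.
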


\section*{Acknowledgements}


\begin{thebibliography}{9}
	\bibitem{BelfiDoran}
		Belfi V. A., Doran R. S. : \textit{Characterizations of $C^*-$algebras. The Gelfand-Naimark theorems}, CRC Press, New York, 1986 
	\bibitem{Bobrowski}
		Bobrowski A. : \textit{Functional Analysis for Probability and Stochastic Processes}, Cambridge University Press, Cambridge, 2005 
	\bibitem{Brezis}
		Brezis H. : \textit{Functional Analysis, Sobolev Spaces and Partial Differential Equations}, Springer-Verlag, New York, 2010
	\bibitem{Choquet}
		Choquet G. : \textit{Topology}, Academic Press, New York, 1966
	\bibitem{Conway}
		Conway J. B. : \textit{A Course in Functional Analysis}, Springer-Verlag, New York, 1985
	\bibitem{Davidson}
		Davidson K. R. : \textit{$C^*-$algebras by example}, American Mathematical Society, Providence, 1996
	\bibitem{Deitmar}
		Deitmar A. : \textit{A First Course in Harmonic Analysis}, Springer-Verlag, New York, 2005
	\bibitem{DeitmarEchterhoff}
	  Deitmar A., Echterhoff S. : \textit{Principles of Harmonic Analysis}, Springer, New York, 2009
	\bibitem{Dixmier}
		Dixmier J. : \textit{$C^*-$algebras}, North Holland Publishing Company, Amsterdam, 1977
	\bibitem{Dixmiertopology}
		Dixmier J. : \textit{General topology}, Springer-Verlag, New York, 1984
	\bibitem{Folland}
		Folland G. B. : \textit{A Course in Abstract Harmonic Analysis}, CRC Press, Boca Raton, 1995
	\bibitem{FollandRealAnalysis}
		Folland G. B. : \textit{Real Analysis: Modern techniques and their applications}, John Wiley and Sons, New York, 1999
	\bibitem{GohbergGoldbergKaashoek}
		Gohberg I., Goldberg S., Kaashoek M. A. : \textit{Classes of Linear Operators, Vol. 1}, Springer, Basel, 1990
	\bibitem{Gorka}
		G\'orka,  P. : \textit{Pego theorem on locally compact abelian groups}, J. Algebra Appl., Vol. 14, No. 4, Article ID 1350143 (2014)
	\bibitem{GorkaKostrzewa} 
		G\'orka P., Kostrzewa T. : \textit{Pego everywhere}, J. Algebra and Appl., Vol. 15, No. 4, Article ID 1650074 (2016) 
	\bibitem{GorkaRafeiro}	
		G\'orka P., Rafeiro H. : \textit{Light side of compactness in Lebesgue spaces: Sudakov theorem}, Ann. Acad. Sci. Fenn. Math., Vol. 42, p. 135-139 (2017)
	\bibitem{HancheOlsenHolden}
		Hanche-Olsen H., Holden H. : \textit{The Kolmogorov-Riesz compactness theorem}, Expo. Math., Vol. 28, No. 4, \mbox{p. 385-394} (2010)
	\bibitem{HancheOlsenHoldenMalinnikova}
		Hanche-Olsen H., Holden H., Malinnikova E. : \textit{An improvement of the Kolmogorov-Riesz compactness theorem}, Expo. Math., Vol. 37, No. 1, p. 84-91 (2019)
	\bibitem{HavinNikolski}
		Havin V. P., Nikolski N. K. : \textit{Commutative Harmonic Analysis II}, Springer-Verlag, Berlin, 1998
	\bibitem{HewittRoss}
		Hewitt E., Ross K. A. : \textit{Abstract Harmonic Analysis. Vol 1}, Springer-Verlag, New York, 1979 
	\bibitem{HewittRoss2}
		Hewitt E., Ross K. A. : \textit{Abstract Harmonic Analysis. Vol 2}, Springer-Verlag, New York, 1997
	\bibitem{Kaniuth}
		Kaniuth E. : \textit{A Course in Commutative Banach Algebras}, Springer-Verlag, New York, 2009
	\bibitem{Kelley}
		Kelley J. L. : \textit{General Topology}, Springer-Verlag, New York, 1975
	\bibitem{Knapp}
		Knapp A. W. : \textit{Basic real analysis}, Birkh\"auser, Boston, 2005
	\bibitem{Kovrizhkin}
		Kovrizhkin O. : \textit{On the norms of discrete analogues of convolution operators}, Proc. Am. Math. Soc., Vol. 140, No. 4, p. 1349-1352 (2012)
	\bibitem{LaoTse}
		Lao Tse : \textit{Tao Te Ching}, The Floating Press, 2008
	\bibitem{Munkres}
		Munkres J. R. : \textit{Topology}, Prentice Hall, Upper Saddle River, 2000
	\bibitem{Pego}
		Pego R. L. : \textit{Compactness in $L^2$ and the Fourier transform}, Proc. Amer. Math. Soc., Vol. 95, No. 2, p. 252-254 (1985)
	\bibitem{ReedSimon}
		Reed M., Simon B. : \textit{Methods of modern mathematical physics}, Academic Press, San Diego, 1980
	\bibitem{ReiterStegman}
		Reiter H., Stegman J. D. : \textit{Classical Harmonic Analysis and Locally Compact Groups}, Clarendon Press, Oxford, 2000 
	\bibitem{RenardyRogers}
		Renardy M., Rogers R. C. : \textit{An Introduction to Partial Differential Equations}, Springer-Verlag, New York, 2004 
	\bibitem{Rudinrealandcomplex}
		Rudin W. : \textit{Real and complex analysis}, McGraw-Hill, Singapore, 1987
	\bibitem{Rudin}
		Rudin W. : \textit{Functional analysis}, McGraw-Hill, New York, 1991
	\bibitem{SteinShakarchi}
		Stein E., Shakarchi R. : \textit{Real analysis: measure theory, integration and Hilbert spaces}, Princeton University Press, Princeton 2005
	\bibitem{Weil}
		Weil A. : \textit{L'int\'{e}gration dans les groupes topologiques et ses applications}, Hermann, Paris, 1965 
\end{thebibliography}
\end{document}